
\documentclass[12pt,reqno]{amsart}
\usepackage[top=1in, bottom=1in, left=1in, right=1in]{geometry}
\usepackage{times, amsthm, amssymb, amsmath, amsfonts, amsthm, bm, graphicx, mathrsfs}
\usepackage{latexsym,wasysym}
\usepackage{amscd}
\usepackage{float}
\usepackage{enumerate}
\usepackage[all]{xy}
\usepackage[usenames,dvipsnames]{color}
\usepackage{tikz}
\usepackage[pagebackref=true]{hyperref}
\usepackage{lineno}

 \parskip=1ex
 \parindent0pt
 \numberwithin{equation}{section}

\usepackage{algpseudocode}

\usepackage{mathtools}

\usepackage{xy}
\input xy
\xyoption{all}

\newcommand{\excise}[1]{}

\newtheorem{theorem}{Theorem}[section]
\newtheorem{proposition}[theorem]{Proposition}
\newtheorem{lemma}[theorem]{Lemma}

\newtheorem*{theorem*}{Theorem}

\theoremstyle{definition}
\newtheorem{definition}[theorem]{Definition}
\newtheorem{example}[theorem]{Example}

\newtheorem{notation}[theorem]{Notation}

\numberwithin{equation}{section}





\newcommand\<{\langle}
\newcommand\CC{{\mathbb C}}

\newcommand\NN{{\mathbb N}}
\newcommand\QQ{{\mathbb Q}}
\newcommand\RR{{\mathbb R}}

\newcommand\ZZ{{\mathbb Z}}

\newcommand\cI{{\mathcal I}}
\newcommand\cJ{{\mathcal J}}

\newcommand\cO{{\mathcal O}}

\newcommand\sF{{\mathscr F}}


\newcommand\del{\partial}


\newcommand\nsupp{{\rm nsupp}}
\newcommand\psupp{{\rm psupp}}

\renewcommand\div{{\rm div}}



\renewcommand\>{\rangle}





\newcommand\0{\mathbf{0}}

\newcommand\kk{\Bbbk}

\renewcommand\AA{{\mathbb A}}



\newcommand\minus{\smallsetminus}


\DeclareMathOperator\Hom{Hom} 
\DeclareMathOperator\Spec{Spec} %
\DeclareMathOperator\relint{int} 
\DeclareMathOperator\cvx{conv} 





\begin{document}

\mbox{}
\title{Bernstein--Sato polynomials on normal toric varieties}
\author{Jen-Chieh Hsiao}
\address{Mathematics Department\\ Cheng Kung University\\ Tainan City 70101, Taiwan}
\email{jhsiao@mail.ncku.edu.tw}
\thanks{JCH was partially supported by MOST grant 105-2115-M-006-015-MY2.}
\author{Laura Felicia Matusevich}
\address{Mathematics Department\\Texas A\&M University\\College Station, TX 77843}
\email{laura@math.tamu.edu}
\thanks{LFM was partially supported by NSF grant DMS 1500832.}

\subjclass[2010]{Primary: 14F10; Secondary: 14M25, 14F18, 14B05}

\begin{abstract}
We generalize the Bernstein--Sato polynomials of Budur,
Musta{\c{t}}{\v{a}}, and Saito
to ideals in normal semigroup rings.
In the case of monomial ideals, we also
relate the roots of the Bernstein--Sato polynomial to the jumping
coefficients of the corresponding multiplier ideals. In order to prove
the latter result, we obtain a new combinatorial description for the
multiplier ideals of a monomial ideal in a normal semigroup ring.
\end{abstract}
\maketitle

\section{Introduction}\label{s:intro}

Let $f \in \CC[x]=\CC[x_1, \dots, x_n]$ be a non-constant polynomial
and let $D =\CC[x,\del_x]$ be the Weyl algebra. The Bernstein--Sato
polynomial (or $b$-function) of $f$, introduced independently
in~\cite{Ber72} and~\cite{SS72}, is the monic polynomial $b_f(s) \in 
\CC[s]$ of smallest degree such that there exists $P(s) \in
D[s]=D\otimes_\CC \CC[s]$ satisfying the functional equation 
$P(s)\cdot f^{s+1} = b_f(s) f^s$. It is well known that $b_f(s)=s+1$
if and only if the hypersurface defined by $f$ is
nonsingular. Moreover, the roots of $b_f(s)$ are related to the
eigenvalues of the Milnor monodromy of $f$~\cite{Mal83}, and the poles of
the local zeta function associated to $f$~\cite{Igu00}.  

Another important singularity invariant are the multiplier ideals
$\mathcal J(\CC^n, \alpha\langle f\rangle)$ associated to the
hypersurface $f$ in $\CC^n$~\cite{Laz04b}. They can be defined via an
embedded log resolution of the pair $(\CC^n, \langle f\rangle)$. When
the coefficient $\alpha$ varies, the multiplier ideal $\mathcal
J(\CC^n, \alpha \langle f \rangle)$ jumps.  In fact, the smallest
jumping coefficient of $(\CC^n, \langle f \rangle)$ (i.e. the
log-canonical threshold of the hypersurface defined by $f$) is the
smallest root of $b_f(-s)$~\cite{Yan83,Lic89,Kol97}. Generalizing this
result, one of the main theorems in~\cite{ELSV} states that if $\xi$
is a jumping coefficient in $(0,1]$ of the pair $(\CC^n, \langle
f\rangle)$, then it is a root of $b_f(-s)$. Along the same lines,
Budur, Musta{\c{t}}{\v{a}}, and Saito  extended the notion of
$b$-functions to the case of arbitrary ideals on smooth affine
varieties~\cite{BMS06}.  Utilizing the theory of $V$-filtrations
from~\cite{Kas83} and~\cite{Mal83}, they showed that the $b$-function
of an ideal is independent of the choice of generators. Furthermore,
they generalized the connection established in~\cite{ELSV} between the
roots of their $b$-functions and the jumping coefficients of
multiplier ideals  to this more general setting. 
 
\begin{theorem}[\cite{BMS06}]\label{thm:BMSmainthm}
Let $I$ be an ideal on a smooth affine variety $X$ over $\CC$. Then
the log-canonical threshold of the pair $(X,I)$ coincides with the
smallest root $\alpha_I$ of the $b$-function $b_I(-s)$, and any
jumping coefficient of $(X,I)$ in $[\alpha_I, \alpha_I+1)$ is a root
of $b_I(-s)$.
\end{theorem}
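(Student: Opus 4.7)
The plan is to derive both statements from the Kashiwara-Malgrange $V$-filtration along a graph embedding, following the strategy of Budur-Musta\c{t}\v{a}-Saito. Fix generators $f_1, \ldots, f_r$ of $I$ and consider the closed embedding $i_f \colon X \hookrightarrow X \times \CC^r$, $x \mapsto (x, f_1(x), \ldots, f_r(x))$. Let $B_f := i_{f,+}\cO_X$ be the $\mathcal{D}$-module pushforward, generated over $\mathcal{D}_{X \times \CC^r}$ by the class $\delta_f$ of $\prod_{i=1}^r \delta(t_i - f_i)$, and let $V^\bullet B_f$ denote the $V$-filtration along $Z = X \times \{0\}$. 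A single filtration will encode both singularity invariants in the theorem.

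First I would recast $b_I(s)$ in $V$-filtration language. Setting $s = -\sum_{i=1}^r \partial_{t_i} t_i - r$, one identifies $b_I(-s)$ with the minimal polynomial of the action of $s$ on the quotient $V^0 N / V^{>0} N$, where $N \subset B_f$ is the $\mathcal{D}_X[t, \partial_t]$-submodule generated by $\delta_f$. This reformulation makes the independence of $b_I$ from the chosen generators transparent, since $V^\bullet$ is canonical. Next I would invoke the corresponding description of multiplier ideals:
\[
  \cJ(X, \alpha I) \;=\; \sigma\bigl(V^\alpha B_f\bigr),
\]
where $\sigma \colon B_f \to \cO_X$ extracts the coefficient of $\delta_f$ in the expansion along $\{\partial_t^\beta \delta_f\}_\beta$. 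Hence $\xi$ is a jumping coefficient of $(X,I)$ precisely when $\sigma(V^\xi B_f) \supsetneq \sigma(V^{>\xi} B_f)$, and $\alpha_I$ equals the smallest $\alpha > 0$ for which $\sigma(V^\alpha B_f) \neq \cO_X$.

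With these two descriptions in place, the log-canonical threshold identification reduces to checking that the smallest root of $b_I(-s)$ coincides with the smallest jump of $\sigma(V^\bullet B_f)$. For the main implication, given a jumping coefficient $\xi \in [\alpha_I, \alpha_I + 1)$, I would produce a nonzero class in $\mathrm{gr}_V^\xi B_f$ on which $s + \xi$ acts nilpotently, and then show that this class survives into the cyclic subquotient $V^0 N / V^{>0} N$ after applying an appropriate integer shift coming from the action of $\partial_{t_i}$'s. The resulting nilpotent action of $s+\xi$ on a nonzero element forces $s + \xi$ to divide the minimal polynomial, so $\xi$ appears as a root of $b_I(-s)$.

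The main obstacle is this last implication. The range restriction $\xi < \alpha_I + 1$ is essential: it prevents the extracted class from being absorbed by an integer shift of another eigenvalue already present in $N$ at a lower level. One must track both the graded structure of $V^\bullet B_f$ under the shift operators $\partial_{t_i}$ and the nilpotent part of the $s$-action, and show that the shift genuinely carries a jump at $\xi$ into an eigenvalue contribution in the subquotient computing $b_I(-s)$. Finally, the positivity of all roots of $b_I(-s)$, combined with $\sigma(V^\alpha B_f) = \cO_X$ for $\alpha < \alpha_I$, closes the remaining inequality in the log-canonical threshold statement.
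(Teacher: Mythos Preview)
The paper does not give its own proof of Theorem~\ref{thm:BMSmainthm}; it is stated as a quoted result of Budur--Musta\c{t}\u{a}--Saito \cite{BMS06} and used as a black box later (in the proof of Theorem~\ref{thm:ToricRootsJumpingCoefficientsCorrespondence}). So there is nothing in the present paper to compare your sketch against.

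That said, your outline is broadly faithful to how \cite{BMS06} actually proceeds: the graph embedding, the identification of $b_I(-s)$ via the action of $s=-\sum_i\partial_{t_i}t_i$ on a graded piece of the $V$-filtration, and the formula $\cJ(X,\alpha I)=\sigma(V^{\alpha}B_f)$ are exactly the ingredients they assemble. A couple of points where your sketch is loose relative to the source: the passage from a jump of $\sigma(V^{\bullet}B_f)$ at $\xi$ to a nonzero class in the subquotient on which the \emph{minimal} polynomial of $s$ is computed is not automatic---one needs that the jump occurs already at the level of $\cO_X\cdot\delta_f$ inside $B_f$, and that the operators $t_i$ and $\partial_{t_i}$ induce isomorphisms $\mathrm{gr}_V^{\alpha}\to\mathrm{gr}_V^{\alpha+1}$ (resp.\ $\alpha-1$) in the relevant range, which is where the restriction $\xi\in[\alpha_I,\alpha_I+1)$ enters precisely. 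Also, the equality of the log-canonical threshold with the smallest root requires both inequalities, and the direction ``smallest root $\le$ lct'' uses that $\sigma(V^{\alpha}B_f)=\cO_X$ forces $\mathrm{gr}_V^{\alpha}$ to be supported away from the generator for $\alpha$ below the lct; you state this but do not indicate why it holds. These are the places where a reader would want you to point to the specific lemmas in \cite{BMS06} rather than argue informally.
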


As the theory of multiplier ideals  is generalized to the case of
arbitrary ideals on normal varieties~\cite{dFH09}, it is interesting
to explore possible generalizations of
Theorem~\ref{thm:BMSmainthm}. To this end, the first task is to seek
for a suitable candidate that plays the role of the Weyl algebra. In
the current work, we restrict to the case where the ambient variety
$X$ is an affine normal toric variety. There exists an explicit
combinatorial description of Grothendieck's ring of differential
operators $D_X$ on the toric variety $X$~\cite{Mus87,Jon94,ST01},
serving as an analog of Weyl algebra in the case of $X=\CC^n$. 
We apply this description of $D_X$ and the results
in~\cite{BMS06,BMS06b} to construct the $b$-function $b_I(s)$ for an
ideal $I$ on the toric variety $X$.  
To state our results more explicitly, let us set up some notations
that will be used throughout this paper. 

\begin{notation}
\label{not:intro} 
\begin{enumerate}
\item \label{not:intro:cone}
Throughout this article,
we work over an algebraically closed field $\kk$ of
characteristic $0$ instead of the complex number field $\CC$. Our
results are valid in this setting. 
The coordinate ring of the affine normal toric variety $X$ is
represented by the semigroup ring $\kk[\NN A]$ generated by the
columns $a_1, \dots, a_d$ of a rank $d$ matrix $A \in \ZZ^{d \times
  m}$. We assume that $\ZZ A = \ZZ^d$, the cone $C=\RR_{\ge 0}A$ in
$\RR^d$ over $A$ is strongly convex, and the semigroup $\NN A$ is
normal, meaning that $C \cap \ZZ^d = \NN A$.
The faces of $C$ are denoted by greek letters $\sigma, \tau$ etc. This
may refer to an index set, to a collection of columns of $A$, or to
the actual face of the cone.
If $\sigma$ is a \emph{facet} of $C$, define its \emph{primitive
  integral support function} $F_\sigma : \RR^d \to \RR$ such that 
\begin{enumerate}
\item $F_\sigma(\ZZ A) = \ZZ$,
\item $F_\sigma(a_i) \geq 0$ for $i=1,\dots,n$,
\item $F_\sigma(a_i)=0$ for $a_i\in \sigma$.
\end{enumerate}
Define the linear map $F: \RR^d \to \RR^{\mathscr{F}} \text{ by } F(p) =
(F_{\sigma}(p))_{\sigma \in \mathscr{F}}$, where $\sF$ is the collection of facets of $C$.
\item We consider $\kk[\NN A]$ as a subring of the Laurent polynomial ring
$\kk[y_1^{\pm 1},\dots,y_d^{\pm 1}]$. Then the ring of differential
operators $D_A$ on the toric variety $X$ can be represented as a
subring of the 
linear partial differential operators on $\del_{y_1},\dots,\del_{y_d}$
with Laurent polynomial coefficients,
\[
D_A = \bigoplus_{u \in \ZZ^d} y^u \{ f(\theta) \in
\kk[\theta_1,\dots,\theta_d] \mid f \text{ vanishes on } \NN A \minus
(-u+\NN A) \}, 
\]
where $\theta_1,\dots,\theta_d$ are the commuting operators 
$y_1\del_{y_1},\dots,y_d\del_{y_d}$.
\item The linear map $F:\RR^d \rightarrow \RR^\sF$ induces a ring homomorphism
\[ \kk[\NN A] \rightarrow \kk[\NN^\sF]
\qquad \text{ via } \qquad
\sum_{j=1}^k \lambda_{j} y^{\beta_j} \mapsto \sum_{j=1}^k \lambda_j x^{F(\beta_j)},
\]
where $\beta_1,\dots,\beta_j \in \NN A$ and $(x_\sigma)_{\sigma \in
  \sF}$ denote algebraically independent variables, so that
$\kk[\NN^{\sF}] =
\kk[x_\sigma \mid \sigma \in \sF]$.
For an ideal $I$ in $\kk[\NN A] \subset \kk[y_1^{\pm 1},\dots,y_d^{\pm
  1}]$, we abuse notation to denote 
\[
F(I) = \left \{ \sum \lambda_jx^{F(\beta_j)} \in \kk [\NN ^\sF ] \,
  \Big| \, \sum_j \lambda_j y^{\beta_j} \in I \right\}.
\]
Note that $F(I)$ is an ideal in the semigroup ring $\kk[F(\NN A)]$.
The ideal in $\kk[\NN^\sF]$ generated by $F(I)$ is denoted by $ J=\< F(I)\>$.
\end{enumerate}
\end{notation}

The first goal of this paper is to extend the notion of $b$-functions
to the toric setting. We use the same definition as in~\cite{BMS06}
except that the Weyl algebra $D$ is replaced by the ring of
differential operators $D_A$ on $X$. Analyzing the combinatorics of
the toric algebras, we obtain the following result. 
\begin{theorem*}[Theorem \ref{thm:ToricbFunctionGeneral}]
The $b$-function $b_I(s)$ of an ideal $I$ on the toric variety $X$
coincides with the $b$-function $b_J(s)$ of the ideal $J = \<F(I)\>$
in the polynomial ring $\kk[\NN^\sF]$.
\end{theorem*}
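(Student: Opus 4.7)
The plan is to realize both $\kk[\NN A]$ and $D_A$ as invariants of a natural torus action on $\kk[\NN^\sF]$ and on its Weyl algebra $D=\kk\<x_\sigma,\partial_\sigma:\sigma\in\sF\>$, and then translate the two defining functional equations into each other via the induced grading.

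First I would set up the torus action. Because $C$ is strongly convex and full-dimensional, its facet functionals $F_\sigma$ separate points of $\RR^d$, so $F:\ZZ^d\into\ZZ^\sF$ is injective. Set $M:=\ZZ^\sF/F(\ZZ^d)$ and equip $\kk[\NN^\sF]$ with the $M$-grading in which $\deg(x_\sigma)=[\sigma]$. Normality of $\NN A$ is equivalent to $F(\NN A)=F(\ZZ^d)\cap\NN^\sF$, so under the identification provided by $F$, the ring $\kk[\NN A]$ is precisely the degree-$0$ subring of $\kk[\NN^\sF]$. Extending the grading to $D$ by $\deg(\partial_\sigma)=-[\sigma]$, I would verify that the degree-$0$ subalgebra $D_0\subset D$ is isomorphic to $D_A$ as a subring of operators on $\kk[\NN A]$: the correspondence $E_\sigma:=x_\sigma\partial_\sigma\leftrightarrow F_\sigma(\theta)$ (and more generally $x^{F(u)^+}\partial^{F(u)^-}\leftrightarrow y^u\cdot F(\theta)^{\underline{F(u)^-}}$) matches the combinatorial description of $D_A$ from \cite{Mus87,Jon94,ST01} with the monomial decomposition of $D_0$.

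Next, I would choose generators $f_1,\dots,f_r$ of $I$, so that $F(f_1),\dots,F(f_r)$ generate $J$; all these generators are $M$-homogeneous of degree $0$. The formal $D[s]$-module $N$ generated by $\prod_i F(f_i)^{s_i}$ then inherits a compatible $M$-grading with the distinguished generator placed in degree $0$: for each $\sigma$ the identity
\[
\partial_\sigma\cdot\prod_i F(f_i)^{s_i}=\sum_i s_i\,\frac{\partial_\sigma F(f_i)}{F(f_i)}\,\prod_i F(f_i)^{s_i}
\]
lands in degree $-[\sigma]$, since each $F(f_i)$ has degree $0$.

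The divisibility $b_J\mid b_I$ is then immediate: any functional equation in $D_A[s]=D_0[s]$ realizing $b_I(s)$ becomes, via $D_0\subset D$ and $f_i\mapsto F(f_i)$, a functional equation witnessing $b_I(s)$ as a $b$-function for $J$. For the reverse divisibility $b_I\mid b_J$, I would start from an equation
\[
b_J(s_1+\cdots+s_r)\prod_i F(f_i)^{s_i}=\sum_{j=1}^{r}P_j\prod_i F(f_i)^{s_i+\delta_{ij}}\qquad(P_j\in D[s])
\]
and decompose each $P_j$ into $M$-homogeneous components $P_j=\sum_m P_j^{(m)}$. Since both sides lie in the degree-$0$ component of $N$, only the degree-$0$ parts $P_j^{(0)}\in D_0[s]=D_A[s]$ contribute, yielding the defining functional equation for $b_I(s)$ in $D_A[s]$.

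The principal obstacle is the ring-theoretic identification $D_A\cong D_0$: one must verify not merely that $D_0$ restricts into $D_A$ (trivial) or that individual elements of $D_A$ lift to $D_0$ (routine), but that the two agree as subrings, so that the functional equation in $D_A[s]$ can be transferred to $D_0[s]$ and conversely. Granted this identification, the graded-averaging step is a formal consequence of the compatibility of the $M$-grading with the $D[s]$-action on the formal module.
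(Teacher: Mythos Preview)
Your strategy is essentially the same as the paper's: both exploit the $\ZZ^\sF$-grading (in your language, the $M=\ZZ^\sF/F(\ZZ^d)$-grading) to transfer functional equations between $D$ and $D_A$. The paper works with the reformulation of Subsection~\ref{ssec:bFunctionDef2} and carries out the transfer by explicit formula; you repackage the same mechanism as ``pass to invariants of a torus action.'' That reframing is fine, but the centerpiece of your plan is incorrect as stated.

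The claimed identification $D_A\cong D_0$ fails whenever the cone is not simplicial, i.e., whenever $|\sF|>d$. Already in $\ZZ^\sF$-degree zero, $D_0$ contains the polynomial ring $\kk[E_\sigma:\sigma\in\sF]$ in $|\sF|$ variables, whereas the degree-zero part of $D_A$ is $\kk[\theta_{y,1},\dots,\theta_{y,d}]$; the map $E_\sigma\mapsto F_\sigma(\theta_y)$ is surjective but kills every linear relation $\sum_\sigma c_\sigma E_\sigma$ with $\sum_\sigma c_\sigma F_\sigma=0$. So you cannot write ``$D_A[s]=D_0[s]$'' and be done. What is actually available is a \emph{surjection} $\pi:D_0\twoheadrightarrow D_A$, and your argument survives only if you verify that the $D_0[s]$-action on $N_0$ factors through $\pi$ (equivalently, that $\ker\pi$ annihilates $N_0$). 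That is exactly the content the paper supplies by hand: it constructs $\hat P\in D_A[s]$ from $P\in D[s]$ and checks the resulting $s$-polynomial is unchanged, and in the reverse direction explicitly notes the non-uniqueness of the lift (``Nonuniqueness comes because we need to choose $d$ linearly independent forms $F_\sigma$\dots''), which is precisely the shadow of $\ker\pi\neq 0$. If you want to keep your abstract formulation, you must add this verification; it is not hard (the derivations $\sum_\sigma c_\sigma E_\sigma$ kill $\kk[F(\NN A)]$ and hence $N_0$ by Leibniz, and one argues $\ker\pi$ is generated by these over $D_0$), but it cannot be skipped.

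One smaller point: the functional equation you display has $P_j\in D[s]$, but Definition~\ref{def:bFunction} requires $P_k\in D[s_{ij}]$. Since the $s_{ij}$ and $t_k$ have $M$-degree zero your grading argument is unaffected, but as written your equation characterizes a possibly larger polynomial than $b_J$. Either work in $D[s_{ij}]$ or, as the paper does, first pass to the $D[s_1,\dots,s_r]$-submodule description of Subsection~\ref{ssec:bFunctionDef2}.
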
 
 In particular, avoiding the study of the $V$-filtration in the toric
 case, we can still conclude that the toric $b$-function $b_I(s)$ is
 independent of the choices of generators of $I$. 

To generalize Theorem~\ref{thm:BMSmainthm} to the toric setting, we
further restrict to the case where $I$ is a monomial ideal (i.e. an ideal
defining a torus-invariant subvariety of $X$). In this case, the
multiplier ideals of the pair $(X, I)$ were described in~\cite{Bli04},
generalizing the work in~\cite{How01} on the multiplier ideals of
monomial ideals in polynomial algebras. Using these formulas, we
obtain a new expression of the multiplier ideals of monomial ideals on
toric varieties using the map $F$
(Proposition~\ref{prop:ToricHowaldFormulaGeneral}), 
\[
\cJ(X,\alpha I) = \< y^v \in \kk[\NN A] \mid F(v)+e  \in
\relint(\alpha P_{J})\>,
\]
where $e \in \NN^\sF$ is such that $e_\sigma=1$ for all $\sigma \in
\sF$ and $\relint (P_J)$ is the relative interior of the Newton
polyhedron $P_J$ of the monomial ideal $J$ in $\kk[\NN^\sF]$. 
In particular, we obtain the identity,
\[ 
\mathcal J(X,\alpha I) = \kk[\NN A] \cap \mathcal J(\kk^\sF, \alpha
J),
\]
which relates the jumping coefficients of the pair $( X, I)$  to those
of $(\kk^\sF, J)$. Since the jumping coefficients of $(\kk^\sF, J)$
are related to the roots of $b_J(-s)$ by Theorem~\ref{thm:BMSmainthm},
our arguments give the following theorem. 
\begin{theorem*}[Theorem~\ref{thm:ToricRootsJumpingCoefficientsCorrespondence}]
Let $I$ be a monomial ideal on an affine normal toric variety $X$ over
an algebraically closed field $\kk$ of characteristic $0$. Then the
log-canonical threshold of the pair $(X,I)$ coincides with the
smallest root $\alpha_I$ of the $b$-function $b_I(-s)$, and any
jumping coefficients of $(X,I)$ in $[\alpha_I, \alpha_I+1)$ are roots
of $b_I(-s)$. 
\end{theorem*}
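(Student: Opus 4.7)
The plan is to reduce this statement to Theorem~\ref{thm:BMSmainthm} of Budur--Musta\c{t}\v{a}--Saito applied to the monomial ideal $J = \langle F(I)\rangle$ in the polynomial ring $\kk[\NN^\sF]$. Two ingredients already in the paper make this reduction possible: first, by Theorem~\ref{thm:ToricbFunctionGeneral}, $b_I(s) = b_J(s)$, so in particular $\alpha_I = \alpha_J$ (the smallest root of $b_J(-s)$); second, Proposition~\ref{prop:ToricHowaldFormulaGeneral} together with the identity $\cJ(X, \alpha I) = \kk[\NN A] \cap \cJ(\kk^\sF, \alpha J)$ ties the multiplier ideals of $(X,I)$ to those of $(\kk^\sF, J)$.

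For the first assertion, I would deduce from the Howald-type formula that $\mathrm{lct}(X,I) = \mathrm{lct}(\kk^\sF, J)$. By Proposition~\ref{prop:ToricHowaldFormulaGeneral}, $\cJ(X,\alpha I) = \kk[\NN A]$ if and only if $F(v)+e \in \relint(\alpha P_J)$ for every $v \in \NN A$. Taking $v=0$ recovers Howald's condition $e \in \relint(\alpha P_J)$, which characterizes $\cJ(\kk^\sF, \alpha J) = \kk[\NN^\sF]$. Conversely, $P_J$ is a Newton polyhedron and hence $\alpha P_J + \RR_{\geq 0}^\sF \subseteq \alpha P_J$, so $\relint(\alpha P_J)$ is stable under translation by the nonnegative orthant; since $F(v) \in \NN^\sF$ for every $v \in \NN A$, the condition $e \in \relint(\alpha P_J)$ already forces $F(v)+e \in \relint(\alpha P_J)$ for all $v$. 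Applying Theorem~\ref{thm:BMSmainthm} to $J$ then gives $\mathrm{lct}(X,I) = \mathrm{lct}(\kk^\sF, J) = \alpha_J = \alpha_I$.

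For the second assertion, let $\xi \in [\alpha_I, \alpha_I+1)$ be a jumping coefficient of $(X,I)$, so $\cJ(X, \xi I) \subsetneq \cJ(X, (\xi-\epsilon) I)$ for all sufficiently small $\epsilon > 0$. By the identity $\cJ(X, \alpha I) = \kk[\NN A] \cap \cJ(\kk^\sF, \alpha J)$, if we had $\cJ(\kk^\sF, \xi J) = \cJ(\kk^\sF, (\xi-\epsilon) J)$, then intersecting with $\kk[\NN A]$ would contradict the strict containment. Hence $\xi$ is a jumping coefficient of $(\kk^\sF, J)$, and since $\alpha_I = \alpha_J$, it lies in $[\alpha_J, \alpha_J+1)$. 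Theorem~\ref{thm:BMSmainthm} applied to $J$ then ensures $\xi$ is a root of $b_J(-s) = b_I(-s)$.

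The main obstacle is verifying the translation-invariance of $\relint(\alpha P_J)$ under $\RR_{\geq 0}^\sF$, which is what allows the single point $v=0$ to control the whole semigroup $\NN A$ in the log-canonical threshold comparison. This is routine once one notes that $P_J$ is full-dimensional when $J \neq 0$ (so that $\relint(\alpha P_J)$ is its topological interior and openness propagates under translation), and the trivial case $I=0$ can be dispatched separately. Everything else is a formal transfer from the polynomial-ring case through the two previously established results.
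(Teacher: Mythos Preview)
Your argument is correct and follows essentially the same route as the paper: identify $b_I$ with $b_J$, show that every jumping coefficient of $(X,I)$ is one of $(\kk^\sF,J)$ via the multiplier-ideal comparison, apply Theorem~\ref{thm:BMSmainthm} to $J$, and witness the lct equality with the lattice point $0=F(0)$. The only cosmetic differences are that the paper phrases the jumping-coefficient transfer using the boundary-lattice-point criterion for $-e+\alpha P_J$ rather than the intersection identity $\cJ(X,\alpha I)=\kk[\NN A]\cap\cJ(\kk^\sF,\alpha J)$, and it cites Proposition~\ref{prop:bFunctionRelation} (the monomial-specific statement) rather than Theorem~\ref{thm:ToricbFunctionGeneral}.
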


\section{Bernstein--Sato Polynomials of Budur, Musta\c{t}\u{a}, and Saito}\label{s:bFunctions}

We recall relevant background on Bernstein--Sato polynomials for
ideals from the work of Budur, Musta\c{t}\u{a} and Saito
\cite{BMS06}. Here we concentrate on the 
case of subvarieties of affine spaces, but the definition works for
subvarieties of a smooth variety.

Let $I$ be an ideal in $\kk[x_1,\dots,x_n]$, generated by
$f_1,\dots,f_r$.  Denote by $D$ the Weyl algebra on $x_1,\dots,x_n$. 
If $s_1,\dots,s_r$ are indeterminates, consider
\[
\kk[x_1,\dots,x_n][\prod_{i=1}^rf_i^{-1},s_1,\dots,s_r] \prod_{i=1}^r
f_i^{s_i} .
\]
This is a $D[s_{ij}]$-module, where $s_{ij} = s_it_i^{-1}t_j$, and the
action of the operator $t_i$ is given by $t_i(s_j) = s_j + \delta_{ij}$ (the
Kronecker delta). 

\begin{definition}
\label{def:bFunction}
The \emph{Bernstein--Sato polynomial} or \emph{$b$-function}
associated to $f=(f_1,\dots,f_r)$ is defined to be the monic polynomial of
the lowest degree in $s=\sum_{i=1}^r s_i$ satisfying a relation of the
form
\begin{equation}
\label{eqn:bFunctionDef}
b_f(s) \prod_{i=1}^r f_i^{s_i} = \sum_{k=1}^r P_kt_k \prod_{i=1}^r f_i^{s_i},
\end{equation}
where $P_1,\dots,P_r \in D[s_{ij}\mid i,j \in \{1,\dots,r\}]$. 
\end{definition}

In~\cite[Section~2]{BMS06}, Budur, Musta\c{t}\u{a} and Saito show that:

\begin{enumerate}
\item $b_f(s)$ is a nonzero polynomial,
\item $b_f(s)$ is independent of the generating set of $I$.
\end{enumerate}

\subsection{An alternative way to define the Bernstein--Sato
  polynomial}
\label{ssec:bFunctionDef2}

In~\cite[Section~2.10]{BMS06} 
Budur, Musta\c{t}\u{a} and Saito give the following way to compute the
$b$-function.

For $c=(c_1,\dots,c_r) \in \ZZ^r$, define $\nsupp(c) = \{i \mid
c_i<0\}$. The Bernstein--Sato polynomial $b_f(s)$ is the monic
polynomial of the smallest degree such that $b_f(s)\prod_{i=1}^r
f_i^{s_i}$ belongs to the $D[s_1,\dots,s_r]$-submodule generated by
\begin{equation}
\label{eqn:bFunctionDef2}
\prod_{i \in \nsupp(c)} \binom{s_i}{-c_i} \cdot \prod_{i=1}^r f_i^{s_i+c_i} 
\end{equation}
where $c=(c_1,\dots,c_r)$ runs over the elements of $\ZZ^r$ such that
$\sum_{i=1}^r c_i=1$. Here $s=\sum_{i=1}^rs_i$ and $\binom{s_i}{m} =
s_i(s_i-1)\cdots(s_i-m+1)/m!$. 

The advantage of this approach is that one now works in the ring
$D[s_1,\dots,s_r]$, and need no longer consider the operators $s_{ij}$ or $t_k$.

\subsection{A monomial-ideal-specific way to define the
  Bernstein--Sato polynomial}
\label{ssec:bFunctionDefMonomial}

The definition given in the previous subsection can be further
specialized to the case of monomial ideals.

Let $\mathfrak{a}$ be a monomial ideal in $\kk[x_1,\dots,x_n]$ whose minimal
monomial generators are $x^{\alpha_1},\dots,x^{\alpha_r}$. Set
$\ell_\alpha(s_1,\dots,s_r) = s_1 \alpha_1+\cdots +s_r \alpha_r$, and 
$\ell_i(s)$ to be the $i$th coordinate of this vector.
For $v \in \ZZ^r$, let $\psupp(v) = \{ i \mid v_i > 0\}$. 

For $c=(c_1,\dots,c_r) \in \ZZ^r$ such that $\sum_{i=1}^r c_i = 1$,
set
\[
g_c(s_1,\dots,s_r) = \prod_{j \in \nsupp(c)} \binom{s_j}{-c_j} \cdot
\prod_{i \in \psupp(\ell_\alpha(c))} \binom{\ell_i(s_1,\dots,s_r)+\ell_i(c)}{\ell_i(c)},
\]
and define $I_\mathfrak{a}$ to be the ideal of $\QQ[s_1,\dots,s_r]$
generated by the polynomials $g_c$. 

Then~\cite[Proposition~4.2]{BMS06} asserts that $b_f(s)$ is the monic
polynomial of smallest degree such that $b_f(\sum_{i=1}^r s_i)$
belongs to the ideal $I_\mathfrak{a}$.

\section{Bernstein--Sato Polynomials on Toric Varieties}
\label{s:bFunctionSemigroupRing}
The goal of this section is to extend Definition~\ref{def:bFunction}
to the toric setting. We first recall some background on the ring of
differential operators of a toric variety.

\subsection{Rings of differential operators on toric varieties}
\label{ss:ToricDifferentialOperators}
For a commutative algebra $R$ over a commutative ring $k$,
Grothendieck's ring of $k$-linear differential operators $D_k(R)$ of
$R$~\cite{EGA4} is the $R$-subalgebra of $\Hom_k(R,R)$ defined by  
\[
D_k(R) = \bigcup_{i \in \NN \cup \{0\}} D_i,
\] 
where $D_0=R$ and $D_i=\{ f \in \Hom_k(R,R) \mid fr-rf \in D_{i-1}
\text{ for all } r \in R\}$ for $i \ge 1$.  
When $R$ is the polynomial ring of $n$ variables over an algebraically
closed field $k$ of characteristic $0$, the ring $D_k(R)$ coincides
with the $n$th Weyl algebra. Moreover, it is well known that if $R$ is
regular over $k$, then $D_k(R)$ is the $R$-algebra generated by the
$k$-derivations of $R$. The most influential applications of
$D$-module theory, and in particular, the $b$-functions
of~\cite{BMS06}, are built in this setting. 

When $R$ is not regular, the ring $D_k(R)$ is not
well-behaved~\cite{BGG72}. Nonetheless, when $R=\kk[\NN A]$ is a toric
algebra over 
an algebraically closed field $\kk$ of characteristic $0$, there
exists an explicit combinatorial description of $D_\kk(\kk[\NN
A])$. This expression of $D_\kk(\kk[\NN A])$ was obtained
independently in~\cite{Mus87} and~\cite{Jon94} when $\NN A$ is normal,
and was further extended to the not necessarily normal case in~\cite{ST01}. 
We denote the ring of differential operators of the toric algebra
$\kk[\NN A]$ by
\[
D_A=D_\kk(\kk[\NN A]).
\]

One way to understand $D_A$ is to identify $\kk[\NN A]$ as a subring
of the Laurent polynomial ring $\kk[y_1^{\pm 1},\dots,y_d^{\pm 1}]$
and study the $\kk$-linear differential operators of $\kk[y_1^{\pm
  1},\dots,y_d^{\pm 1}]$. It is known that if $S$ is a
multiplicatively closed subset of a $k$-algebra $R$, then 
$D_k(S^{-1}R) \cong S^{-1}R \otimes_R D_k(R)$. It  follows that 
\[
D_\kk(\kk[y_1^{\pm 1},\dots,y_d^{\pm 1}]) = \kk[y_1^{\pm
  1},\dots,y_d^{\pm 1}]\< \del_{y_1}, \dots, \del_{y_d}\>,
\] 
is a localization of the $n$th Weyl algebra. Moreover, one can realize 
\[ 
D_A=\{\delta \in D_\kk(\kk[y_1^{\pm 1},\dots,y_d^{\pm 1}]) \mid
\delta (\kk[\NN A]) \subseteq \kk[\NN A] \}
\] 
as a subring of
$D_\kk(\kk[y_1^{\pm 1},\dots,y_d^{\pm 1}]) $. In particular, we have
the $\ZZ^d$-graded expression 
\begin{equation}
\label{eqn:DA}
D_A = \bigoplus_{u \in \ZZ^d} y^u \{ f(\theta) \in
\kk[\theta_1,\dots,\theta_d] \mid f \text{ vanishes on } \NN A \minus
(-u+\NN A) \}
\end{equation}
where $\theta_1,\dots,\theta_d$ are the commuting operators $y_i
\del_{y_i}$, for $i=1,\dots,d$.
Since we are only concerned with the case when $\NN A$ is normal, for
a given $u \in \ZZ^d$, the ideal  
\[
\{ f(\theta) \in
\kk[\theta_1,\dots,\theta_d] \mid f \text{ vanishes on } \NN A \minus
(-u+\NN A) \}
\] 
is the principal ideal in  $\CC[\theta_1,\dots,\theta_d]$ generated by 
\begin{equation}
\label{eqn:differentialGenerator}
\prod_{F_{\sigma}(u)>0} \prod_{j=0}^{F_{\sigma}(u)-1} ( F_\sigma(\theta_1,\dots,\theta_d)-
j ),
\end{equation}
where the product runs through all facets $\sigma$ (with
$F_\sigma(u)>0$) of the cone $C$ associated to $\NN A$, and $F_\sigma
$ is the primitive integral support function of $\sigma$ as defined in
Notation~\ref{not:intro}.\ref{not:intro:cone}.

We are now ready to define the $b$-function for an ideal $I$ in
$\kk[\NN A]$. To make the exposition more transparent, we first treat
the case when $I$ is an monomial ideal. 
\subsection{Monomial ideals in semigroup rings}
\label{ss:toricBernsteinSatoMonomial}

Let $\mathfrak{a}$ be a monomial ideal in $\kk[\NN A]$ whose minimal
monomial generators are Laurent monomials $y^{\beta_1},\dots,y^{\beta_r}$. 
For $f = (y^{\beta_1},\dots,y^{\beta_r})$, we define the
Bernstein--Sato polynomial $b_f(s)$ exactly as in
Definition~\ref{def:bFunction}, except that the Weyl algebra $D$ is
replaced by the ring of differential operators $D_A$ on $\kk[\NN A]$.

We note that the reduction in Subsection~\ref{ssec:bFunctionDef2} goes
unchanged when we switch to the setting of semigroup rings, since that
reduction is concerned only with the operators $s_{ij}$ 
and $t_k$, and does not depend at all on the ambient differential
operators.

Our task becomes to translate the situation from
Subsection~\ref{ssec:bFunctionDefMonomial} to the new setting with
monomial ideals in semigroup rings, rather than monomial ideals in
polynomial rings.

Let $c \in \ZZ^r$ whose coordinates sum to $1$, and for $\beta
=(\beta_1,\dots,\beta_r) \in (\ZZ^d)^r$, set 
$\ell_\beta(s_1,\dots,s_r)
  =s_1\beta_1+\cdots+s_r\beta_r$.

In order to find the Bernstein--Sato polynomial, we need to apply an operator
in $D_A[s_1,\dots,s_r]$ to $\prod_{i \in \nsupp(c)} \binom{s_1}{-c_i} \cdot \prod_{i=1}^r
  (y^{\beta_i})^{s_i+c_i}$ in such a way that the outcome is a
  multiple of $y^{\sum_{i=1}^rs_i \beta_i} =
  y^{\ell_\beta(s_1,\dots,s_r)}$.

If we apply the operator from~\eqref{eqn:differentialGenerator} with
$u=\ell_\beta(c)$ to  $\prod_{i
  \in \nsupp(c)} \binom{s_1}{-c_i} \cdot \prod_{i=1}^r
  (y^{\beta_i})^{s_i+c_i}$ we obtain
\[
\prod_{i \in \nsupp(c)} \binom{s_1}{-c_i}
\prod_{F_{\sigma}(\ell(c))>0} \prod_{j=0}^{F_{\sigma}(\ell(c))-1} (
F_\sigma(\ell_\beta(s_1,\dots,s_r) + \ell_\beta(c))-
j ) 
\cdot
 y^{\ell(s_1,\dots,s_r)} .
\]

\begin{proposition}
\label{prop:simplifiedbFunctionSemigroupRing}
We use the notation introduced above. 
The Bernstein--Sato polynomial $b_f(s)$ 
for $f =(y^{\beta_1},\dots,y^{\beta_r})$ considered as a monomial
ideal in $\kk[\NN A]$ is the monic polynomial of smallest degree such
that $b_f(s_1+\cdots+s_r)$ belongs to the ideal generated by 
\begin{equation}
\label{eqn:bFunctionGenerator}
\prod_{i \in \nsupp(c)} \binom{s_1}{-c_i} 
\prod_{F_{\sigma}(\ell_\beta(c))>0}
\binom{F_\sigma\big(\ell_\beta(s_1,\dots,s_r)+\ell_\beta(c)\big)}{F_{\sigma}(\ell_\beta(c))} 
\end{equation}
where $c \in \ZZ^r$ has coordinate sum $1$.
\end{proposition}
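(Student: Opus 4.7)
The plan is to follow the derivation of~\cite[Proposition~4.2]{BMS06} recalled in Subsection~\ref{ssec:bFunctionDefMonomial}, but replacing the Weyl algebra by $D_A$ and exploiting the $\ZZ^d$-graded description~\eqref{eqn:DA}. As observed just above the statement, the reduction of Subsection~\ref{ssec:bFunctionDef2} is formal and carries over verbatim to semigroup rings, so $b_f(s)$ is characterized as the monic polynomial of smallest degree in $s=\sum_i s_i$ such that $b_f(s)\prod_{i=1}^{r}(y^{\beta_i})^{s_i}$ lies in the $D_A[s_1,\dots,s_r]$-submodule generated by
\[
g_c \defeq \prod_{i\in\nsupp(c)}\binom{s_i}{-c_i}\,\prod_{i=1}^{r}(y^{\beta_i})^{s_i+c_i},\qquad c\in\ZZ^r,\ \textstyle\sum_i c_i=1.
\]
Writing $y^{\ell_\beta(s_1,\dots,s_r)}=\prod_i(y^{\beta_i})^{s_i}$, the task reduces to computing the set of coefficients of $y^{\ell_\beta(s)}$ produced by applying operators in $D_A[s_1,\dots,s_r]$ to the generators $g_c$.

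Next I would extract those operators using the $\ZZ^d$-grading of $D_A$. Since any application moves the $\ZZ^d$-degree of $g_c$ from $\ell_\beta(s)+\ell_\beta(c)$ to $\ell_\beta(s)$, only the degree $u=-\ell_\beta(c)$ piece of $D_A$ can contribute a nonzero coefficient of $y^{\ell_\beta(s)}$. By~\eqref{eqn:DA} and~\eqref{eqn:differentialGenerator}, together with the normality of $\NN A$, this graded piece equals $y^{-\ell_\beta(c)}\,f_c(\theta)\,\kk[\theta_1,\dots,\theta_d]$, where
\[
f_c(\theta)=\prod_{F_\sigma(\ell_\beta(c))>0}\,\prod_{j=0}^{F_\sigma(\ell_\beta(c))-1}\bigl(F_\sigma(\theta)-j\bigr)
\]
generates the ideal of polynomials in $\theta$ vanishing on $\NN A\minus(\ell_\beta(c)+\NN A)$. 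A general such operator is $y^{-\ell_\beta(c)}f_c(\theta)h(\theta)$ with $h\in\kk[\theta]$ arbitrary, and applied to $g_c$ it acts by substituting $\theta=\ell_\beta(s)+\ell_\beta(c)$, yielding
\[
\prod_{i\in\nsupp(c)}\binom{s_i}{-c_i}\,f_c\bigl(\ell_\beta(s)+\ell_\beta(c)\bigr)\,h\bigl(\ell_\beta(s)+\ell_\beta(c)\bigr)\cdot y^{\ell_\beta(s)}.
\]
Because $h$ is arbitrary and $f_c(\ell_\beta(s)+\ell_\beta(c))$ differs from $\prod_{\sigma}\binom{F_\sigma(\ell_\beta(s)+\ell_\beta(c))}{F_\sigma(\ell_\beta(c))}$ only by the nonzero scalar $\prod_\sigma F_\sigma(\ell_\beta(c))!$, the totality of coefficients obtained as $c$ and $h$ vary is exactly the ideal of $\kk[s_1,\dots,s_r]$ generated by the expressions in~\eqref{eqn:bFunctionGenerator}.

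Combining the two observations, $b_f(s_1+\cdots+s_r)\cdot y^{\ell_\beta(s)}$ belongs to the $D_A[s_1,\dots,s_r]$-submodule generated by $\{g_c\}$ if and only if $b_f(s_1+\cdots+s_r)$ lies in the ideal generated by~\eqref{eqn:bFunctionGenerator}; the minimality of the degree transfers in the same way, giving the proposition. The main obstacle is the homogeneous reduction in the middle step: one must verify that allowing inhomogeneous operators in $D_A[s_1,\dots,s_r]$ produces no additional coefficients of $y^{\ell_\beta(s)}$ beyond those coming from the degree $-\ell_\beta(c)$ piece. This follows because the target monomial $y^{\ell_\beta(s)}$ lies in a single $\ZZ^d$-degree, so extracting its coefficient kills every other graded component of any operator acting on $g_c$; this is the semigroup-ring analogue of the argument used in~\cite[Section~4]{BMS06}.
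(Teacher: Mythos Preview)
Your argument is correct and follows the same approach as the paper: the paper's proof is essentially a one-line appeal to the $\ZZ^d$-graded description~\eqref{eqn:DA}--\eqref{eqn:differentialGenerator} of $D_A$, and you have simply spelled out that appeal in full, isolating the degree $-\ell_\beta(c)$ component and computing its action on $g_c$. One small point of exposition: when you write ``$h\in\kk[\theta]$ arbitrary'' you should really allow $h\in\kk[s_1,\dots,s_r][\theta]$ (since the operators lie in $D_A[s_1,\dots,s_r]$), which is what guarantees that the resulting coefficients form a $\kk[s_1,\dots,s_r]$-ideal rather than merely a $\kk[\ell_\beta(s)]$-module; this does not affect the generators~\eqref{eqn:bFunctionGenerator} or the conclusion.
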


\begin{proof}
It remains to be shown that, for any other operator (in
$D_A[s_1,\dots,s_r]$) which applied to $\prod_{i
  \in \nsupp(c)} \binom{s_1}{-c_i} \cdot \prod_{i=1}^r
  (y^{\beta_i})^{s_i+c_i}$ yields a multiple of
  $y^{\ell(s_1,\dots,s_r)}$, this multiple belongs to the ideal
  generated by~\eqref{eqn:bFunctionGenerator}.

This follows from the description of $D_A$ given by~\eqref{eqn:DA}
and~\eqref{eqn:differentialGenerator}. 
\end{proof}

Recall that $\mathscr{F}$ is the collection of facets of the cone $C$
of $\NN A$, and that the map $F: \RR^d \to \RR^{\mathscr{F}} $ defined
by $F(p) = (F_{\sigma}(p))_{\sigma \in \mathscr{F}}$  from
Notation~\ref{not:intro}.\ref{not:intro:cone} 
induces an inclusion $\kk[\NN A] \rightarrow \kk[\NN^\sF]$.

\begin{proposition}
\label{prop:bFunctionRelation}
Denote by $F(f)$ the sequence of monomials $(x^{F(\beta_1)},\dots,x^{F(\beta_r)})$ in $\kk[\NN^\sF]$.
The Bernstein--Sato polynomial $b_{F(f)}(s)$ of $F(f)$ in the  
polynomial ring $\kk[\NN^\sF]$ coincides with the Bernstein--Sato polynomial
 $b_f(s)$ of $f=(x^{\beta_1},\dots,x^{\beta_r})$ in $\kk[\NN A]$. In
 particular, the latter polynomial $b_f(s)$ is nonzero, 
and its roots can be computed 
using the 
combinatorial description~\cite[Theorem~1.1]{BMS06b} applied to
$\<x^{F(\beta_1)},\dots,x^{F(\beta_r)}\> \subseteq \kk[\NN^\sF]$.
\end{proposition}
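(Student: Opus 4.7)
The plan is to derive the equality of the two Bernstein--Sato polynomials by comparing the two combinatorial characterizations of the $b$-functions: Proposition~\ref{prop:simplifiedbFunctionSemigroupRing} for the toric side, and the description recalled in Subsection~\ref{ssec:bFunctionDefMonomial} (that is, \cite[Proposition~4.2]{BMS06}) for the polynomial-ring side applied to the monomial ideal $\langle x^{F(\beta_1)},\dots,x^{F(\beta_r)}\rangle \subseteq \kk[\NN^\sF]$. If these two descriptions produce the same ideal in $\QQ[s_1,\dots,s_r]$, then the monic polynomials of smallest degree in $s=s_1+\cdots+s_r$ belonging to each of them must coincide, which is precisely the claim.

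First I would write down the generators from Subsection~\ref{ssec:bFunctionDefMonomial} for the ideal $\mathfrak{a} = \langle x^{F(\beta_1)},\dots,x^{F(\beta_r)}\rangle$. Here the exponents $F(\beta_i) \in \NN^\sF$ are indexed by facets $\sigma \in \sF$, so $\ell_{F(\beta)}(s) = \sum_{i=1}^r s_i F(\beta_i)$ has $\sigma$-coordinate equal to $\sum_{i=1}^r s_i F_\sigma(\beta_i) = F_\sigma\bigl(\ell_\beta(s_1,\dots,s_r)\bigr)$ by linearity of $F_\sigma$. Consequently, the set
\[
\psupp\bigl(\ell_{F(\beta)}(c)\bigr) = \{\sigma \in \sF \mid F_\sigma(\ell_\beta(c))>0\},
\]
and the generators $g_c$ of \cite[Proposition~4.2]{BMS06} take the form
\[
\prod_{j \in \nsupp(c)} \binom{s_j}{-c_j} \prod_{F_\sigma(\ell_\beta(c))>0}
\binom{F_\sigma\bigl(\ell_\beta(s_1,\dots,s_r)\bigr)+F_\sigma\bigl(\ell_\beta(c)\bigr)}{F_\sigma\bigl(\ell_\beta(c)\bigr)},
\]
where $c$ ranges over vectors in $\ZZ^r$ with $\sum_i c_i=1$. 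This is exactly the generating set~\eqref{eqn:bFunctionGenerator} from Proposition~\ref{prop:simplifiedbFunctionSemigroupRing}.

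Thus the ideal $I_{\mathfrak{a}} \subseteq \QQ[s_1,\dots,s_r]$ which controls $b_{F(f)}$ coincides identically with the ideal which controls $b_f$ by Proposition~\ref{prop:simplifiedbFunctionSemigroupRing}. Taking the monic polynomial of smallest degree in $s=s_1+\cdots+s_r$ belonging to this common ideal gives $b_{F(f)}(s)=b_f(s)$. Since $b_{F(f)}(s)$ is a nonzero polynomial by \cite[Section~2]{BMS06}, so is $b_f(s)$; moreover, its roots are computed by the combinatorial recipe of~\cite[Theorem~1.1]{BMS06b} applied to $\langle x^{F(\beta_1)},\dots,x^{F(\beta_r)}\rangle$.

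I do not anticipate a real obstacle: the only substantive content is recognizing that the facet-indexed linear forms $F_\sigma(\ell_\beta(\cdot))$ from the toric description of $D_A$ are precisely the coordinate functions $\ell_i(\cdot)$ used in the polynomial-ring description of $b$-functions for monomial ideals, once we pass to $\kk[\NN^\sF]$ via the map $F$. The mildly delicate point is the bookkeeping on indices—ensuring that the products over facets with $F_\sigma(\ell_\beta(c))>0$ and over $i \in \psupp(\ell_{F(\beta)}(c))$ really range over the same set—but this is immediate from the definitions.
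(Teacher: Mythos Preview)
Your approach is essentially the same as the paper's: both identify the generators~\eqref{eqn:bFunctionGenerator} from Proposition~\ref{prop:simplifiedbFunctionSemigroupRing} with the generators~\eqref{eqn:generatorsTransformed} arising from \cite[Proposition~4.2]{BMS06} applied to $\langle x^{F(\beta_1)},\dots,x^{F(\beta_r)}\rangle$, via the observation $\ell_{F(\beta)} = F\circ \ell_\beta$.

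There is one step you skip that the paper carries out explicitly. The description in Subsection~\ref{ssec:bFunctionDefMonomial} is stated for the \emph{minimal} monomial generators of the ideal, so before invoking it you need to check that $x^{F(\beta_1)},\dots,x^{F(\beta_r)}$ are in fact minimal generators of the ideal they span in $\kk[\NN^\sF]$. This is not automatic: a priori the map $F$ could introduce divisibilities among the $x^{F(\beta_i)}$ that were not present among the $y^{\beta_i}$. The paper handles this by noting that $x^{F(\beta_i)}\mid x^{F(\beta_j)}$ in $\kk[\NN^\sF]$ means $F_\sigma(\beta_j-\beta_i)\geq 0$ for all facets $\sigma$, hence $\beta_j-\beta_i\in C\cap\ZZ^d = \NN A$ by normality, so $y^{\beta_i}\mid y^{\beta_j}$ in $\kk[\NN A]$, contradicting minimality of the original generators. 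You should insert this short argument; otherwise the appeal to \cite[Proposition~4.2]{BMS06} is not justified as stated.
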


\begin{proof}
First we observe that the minimal monomial generators of
$\<x^{F(\beta_1)},\dots,x^{F(\beta_r)}\> \subseteq \kk[\NN^\sF]$
are the monomials $x^{F(\beta_1)},\dots,x^{F(\beta_r)}$.
To see this, note that $x^{F(\beta_i)}$ divides $x^{F(\beta_j)}$ is
equivalent to $F_\sigma(\beta_i) \leq F_\sigma(\beta_j)$ for all
facets $\sigma$ of the cone $C$. This implies that $\beta_j-\beta_i$
lies in the cone $C$. Since $\beta_j-\beta_i \in \ZZ^r$ and $\NN A$ is
normal, we see that $\beta_j-\beta_i \in \NN A$, and therefore
$y^{\beta_i}$ divides $y^{\beta_j}$ in $\kk[\NN A]$.

Let $\alpha_i = F(\beta_i)$ for $i=1,\dots,r$, and
$\ell_\alpha(s_1,\dots,s_r) = s_1\alpha_1+\cdots+s_r\alpha_r$, and
denote by $\ell_i(s_1,\dots,s_r)$ the $i$th coordinate of the vector
$\ell_\alpha(s_1,\dots,s_r)$. 
By~\cite[Proposition~4.2]{BMS06} 
the Bernstein--Sato polynomial of
$\<x^{F(\beta_1)},\dots,x^{F(\beta_r)}\>$ is the monic polynomial $p$
of smallest degree such that $p(s_1+\dots+s_r)$ lies in the ideal
generated by
\begin{equation}
\label{eqn:generatorsTransformed}
\prod_{j \in \nsupp(c)} \binom{s_j}{-c_j} 
\prod_{i \in  \psupp(\ell_{\alpha}(c))} \binom{\ell_i(s_1,\dots,s_r)+\ell_i(c)}{\ell_i(c)}
\end{equation}
for $c\in \ZZ^r$ with coordinate sum $1$.
But note that by construction $\ell_\alpha = F \circ \ell_\beta$, 
so that the generators~\eqref{eqn:generatorsTransformed}
coincide exactly with the generators~\eqref{eqn:bFunctionGenerator}.
As a reality check, note that both sets of generators belong to the
ring $\kk[s_1,\dots,s_r]$, which does not depend on the ambient rings
of the monomial ideals involved.
\end{proof}
\subsection{The general case}
\label{ss:toricBernsteinSatoGeneral}

Again, recall that we have a linear map 
$F:\RR^d \to \RR^{\mathscr{F}}$ with $F(\NN A) \subseteq
\NN^{\mathscr{F}}$. We construct a ring homomorphism 
\begin{equation}
\label{eqn:ringMap}
\kk[\NN A]\to \kk[\NN^{\mathscr{F}}]  \qquad \text{ via } \qquad
\sum_{j=1}^k \lambda_{j} y^{\beta_j} \mapsto \sum_{j=1}^k \lambda_j x^{F(\beta_j)},
\end{equation}
where $\beta_1,\dots,\beta_j \in \NN A$. That this is a homomorphism
follows from linearity of $F$, as $x^{F(\beta+\beta')}=x^{F(\beta)+F(\beta')}=x^{F(\beta)}x^{F(\beta')}$.
We abuse notation and denote the homomorphism~\eqref{eqn:ringMap} by
$F$.

If $I \subset \kk[\NN A]$ is an ideal, then its image $F(I)$ is not an ideal in
$\kk[\NN^{\mathscr{F}}]$; we denote by $\<F(I)\>$ the ideal in
$\kk[\NN^{\mathscr{F}}]$ generated by $F(I)$.

\begin{lemma}
Let $I \subset \kk[\NN A]$ be an ideal generated by $g_1,\dots,g_r \in
\kk[\NN A]$. Then $F(g_1),\dots,F(g_r)$ generate $\<F(I)\>$.
\end{lemma}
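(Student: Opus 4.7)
The plan is to exploit the fact that $F$ is a ring homomorphism, so that generation passes through it in the expected way. Both containments should be essentially formal.

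First I would observe the easy direction: since $g_i \in I$, each $F(g_i)$ lies in $F(I)$, so the ideal $\<F(g_1),\dots,F(g_r)\>$ of $\kk[\NN^\sF]$ is contained in $\<F(I)\>$.

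For the reverse containment, it suffices to show that every element of $F(I)$ lies in $\<F(g_1),\dots,F(g_r)\>$. By definition of $F(I)$, an arbitrary such element has the form $F(h)$ for some $h \in I$. Since $g_1,\dots,g_r$ generate $I$ in $\kk[\NN A]$, we may write $h = \sum_{i=1}^r f_i g_i$ with $f_i \in \kk[\NN A]$. Applying the ring homomorphism $F$ (established in~\eqref{eqn:ringMap}) term by term gives
\[
F(h) = \sum_{i=1}^r F(f_i)\, F(g_i),
\]
and since each $F(f_i) \in \kk[F(\NN A)] \subseteq \kk[\NN^\sF]$, this exhibits $F(h)$ as an element of the ideal $\<F(g_1),\dots,F(g_r)\>$. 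Thus $F(I) \subseteq \<F(g_1),\dots,F(g_r)\>$, and taking the ideal generated by both sides yields $\<F(I)\> \subseteq \<F(g_1),\dots,F(g_r)\>$.

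There is no real obstacle here; the only point that requires any care is making sure one uses the homomorphism property of $F$ (established just before the lemma via the linearity of $F:\RR^d \to \RR^\sF$) rather than confusing $F(I)$ (a subset of $\kk[F(\NN A)]$) with $\<F(I)\>$ (an ideal in the larger ring $\kk[\NN^\sF]$). The statement is essentially the standard fact that the image of an ideal under a ring homomorphism, extended to the target, is generated by the images of any generating set of the source ideal.
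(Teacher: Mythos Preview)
Your proof is correct and follows essentially the same approach as the paper: both use that $F$ is a ring homomorphism to write any element of $F(I)$ as a $\kk[\NN^\sF]$-linear combination of the $F(g_i)$. Your version is just more carefully written, making explicit both containments and the distinction between $F(I)$ and $\<F(I)\>$ that the paper's one-line proof leaves implicit.
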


\begin{proof}
Since $g_1,\dots,g_r$ generate $I$, and $F$ is a ring homomorphism,
any element of $F(I)$ is obtained as a combination of
$F(g_1),\dots,F(g_r)$ with coefficients in
$\kk[\NN^{\mathscr{F}}]$. This implies that the polynomials
$F(g_1),\dots,F(g_r)$ generate $\<F(I)\>$.
\end{proof}

We apply Definition~\ref{def:bFunction} 
to a sequence of polynomials in $\kk[\NN A]$, using the ring of
differential operators $D_A$ instead of $D$.
The following is the main result of this section.

\begin{theorem}\label{thm:ToricbFunctionGeneral}
Let $I \subset \kk[\NN A]$ be an ideal, and let $g_1,\dots,g_r$ be
generators for $I$. 
The Bernstein--Sato polynomial of $g=(g_1,\dots,g_r)$ in $\kk[\NN A]$ equals the
Bernstein--Sato polynomial of $f=(f_1,\dots,f_r)=F(g) = (F(g_1),\dots,F(g_r))$ in
$\kk[\NN^{\mathscr{F}}]$.
Consequently, this Bernstein--Sato polynomial is nonzero, and depends only on $I$,
not on the particular set of generators chosen.
\end{theorem}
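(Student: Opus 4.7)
The plan is to reduce the assertion to the polynomial case by transporting the alternative characterization of the $b$-function from Subsection~\ref{ssec:bFunctionDef2} through the ring map $F$, using the explicit $\ZZ^d$-graded description of $D_A$ in~\eqref{eqn:DA}--\eqref{eqn:differentialGenerator}. In both settings, the $b$-function is the monic polynomial of least degree in $s=s_1+\cdots+s_r$ such that the appropriate product lies in the $D[s_1,\dots,s_r]$-submodule generated by
\[
h_c=\prod_{i\in\nsupp(c)}\binom{s_i}{-c_i}\prod_{i=1}^r g_i^{s_i+c_i}
\]
(respectively by $F(h_c)$), as $c\in\ZZ^r$ ranges over vectors with $\sum c_i=1$. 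Since $F(g_i)=f_i$, the map $F$ induces a natural $\kk[s_1,\dots,s_r]$-linear map between the two function modules sending $\prod_i g_i^{s_i}\mapsto\prod_i f_i^{s_i}$ and $h_c\mapsto F(h_c)$. I will then prove both divisibilities $b_f\mid b_g$ and $b_g\mid b_f$.

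For the forward inequality $b_f\mid b_g$, I would start from a $D_A[s]$-relation witnessing $b_g(s)$ and decompose each operator $Q_c$ using the $\ZZ^d$-grading~\eqref{eqn:DA}, writing it as a sum of terms of the form $y^u P_u(\theta,s)$. The crucial observation is that $F_\sigma(\theta_1,\dots,\theta_d)\in D_A$ and $\theta'_\sigma=x_\sigma\partial_{x_\sigma}$ in the Weyl algebra on $\kk[\NN^\sF]$ act diagonally on monomial bases with the \emph{same} eigenvalue $F_\sigma(\beta)$ on $y^\beta$ and $x^{F(\beta)}$ respectively; combined with $F(y^u)=x^{F(u)}$, this shows that each generator~\eqref{eqn:differentialGenerator} of the $u$-graded component of $D_A$ is carried by $F$ to a well-defined operator in $D_{\kk[\NN^\sF]}$, with any negative components $F_\sigma(u)<0$ absorbed into the localization at the $f_i$. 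The relation for $b_g$ then transports to one for $b_f$.

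For the backward inequality $b_g\mid b_f$, the argument is more delicate. Given a relation witnessing $b_f(s)$, I would decompose each $Q'_c\in D_{\kk[\NN^\sF]}[s]$ using the $\ZZ^\sF$-grading of the polynomial Weyl algebra. Both $b_f(s)\prod_i f_i^{s_i}$ and every $F(h_c)$ already lie inside the $F$-image of the $g$-side function module, whose lattice of degrees is the sublattice $F(\ZZ^d)\subset\ZZ^\sF$; extracting the graded components of each $Q'_c$, only those whose shift lies in $F(\ZZ^d)$ can contribute to the relation. Injectivity of $F:\ZZ^d\to\ZZ^\sF$ (ensured by strong convexity and full-dimensionality of $C$) together with the identifications $\theta'_\sigma\leftrightarrow F_\sigma(\theta)$ and $x^{F(u)}\leftrightarrow y^u$ allow one to lift these contributing components back to elements of $D_A$; the vanishing condition of~\eqref{eqn:differentialGenerator} on the lift is forced by the corresponding polynomial-side vanishing condition and by normality of $\NN A$.

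The main obstacle will be the backward direction: verifying that the graded decomposition of each $Q'_c$ genuinely restricts the contributing operators to the sublattice $F(\ZZ^d)$, and that the lifted operators satisfy the precise vanishing conditions needed to lie in $D_A$. Once the equality $b_g(s)=b_f(s)$ is established, the nonzeroness of $b_g$ and its independence of the chosen generators of $I$ follow immediately from the corresponding results of Budur, Musta\c{t}\u{a}, and Saito~\cite{BMS06} on the polynomial ring side.
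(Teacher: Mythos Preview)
Your plan is correct and matches the paper's proof closely: both directions use the reformulation from Subsection~\ref{ssec:bFunctionDef2}, decompose the operators via the $\ZZ^d$- and $\ZZ^{\sF}$-gradings of $D_A$ and of the Weyl algebra, observe that only components with shift in the sublattice $F(\ZZ^d)$ contribute, and then transport relations through the identification $(\theta_x)_\sigma \leftrightarrow F_\sigma(\theta_y)$, $x^{F(v)}\leftrightarrow y^v$. One small caution on your forward direction: rather than ``absorbing'' the negative components $F_\sigma(u)<0$ into the localization at the $f_i$ (which would take you outside the Weyl algebra and hence would not directly witness $b_f$), note that the graded description~\eqref{eqn:DA}--\eqref{eqn:differentialGenerator} applies equally to the polynomial ring $\kk[\NN^\sF]$ viewed as a normal semigroup ring, so $x^{F(v)}\bigl[\prod_{F_\sigma(v)<0}\prod_{j}((\theta_x)_\sigma-j)\bigr]$ already lies in $D$; and when pushing the residual factor $p_v(\theta_y)$ forward you must first rewrite it (non-uniquely) in terms of the linear forms $F_\sigma(\theta_y)$ by choosing $d$ independent facets, a choice the paper notes is harmless since $F$ has $d$-dimensional image.
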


\begin{proof}
We know that the Bernstein--Sato polynomial $b_g(s)$ is the monic
polynomial of smallest degree such that, for $s=s_1+\cdots+s_r$, 
$b_g(s)\prod_{i=1}^r g_i^{s_i}$ belongs to the $D_A[s_1,\dots,s_r]$-submodule generated by 
\[
\prod_{i\in \nsupp(c)} \binom{s_i}{-c_i} \cdot \prod_{i=1}^r
g_i^{s_i+c_i}, \qquad \text{for}\;\; (c_1,\dots,c_r)\in \ZZ^r
\;\text{with} \;\; \sum_{i=1}^r c_i=1,
\]
while $b_f(s)$ is is the monic
polynomial of smallest degree such that, 
$b_f(s)\prod_{i=1}^r f_i^{s_i}$ belongs to the $D[s_1,\dots,s_r]$-submodule generated by 
\[
\prod_{i\in \nsupp(c)} \binom{s_i}{-c_i} \cdot \prod_{i=1}^r
f_i^{s_i+c_i}, \qquad \text{for}\;\; (c_1,\dots,c_r)\in \ZZ^r
\;\text{with} \;\; \sum_{i=1}^r c_i=1.
\]
Here $D$ is the ring of differential operators on
$\kk[\NN^{\mathscr{F}}]$, and $D_A$ is the ring of differential
operators on $\kk[\NN A]$.

Fix $c \in \ZZ^r$ such that $\sum c_i=1$, and let $P \in
D[s_1,\dots,s_r]$ such that 
$P \big[ \prod_{i\in \nsupp(c)} \binom{s_i}{-c_i} \cdot \prod_{i=1}^r
f_i^{s_i+c_i} \big]$ is a polynomial in $s_1,\dots,s_r$ times
$\prod_{i=1}^r f_i^{s_i}$. 
Applying the description given by~\eqref{eqn:DA}
and~\eqref{eqn:differentialGenerator} to $D$ (instead of $D_A$), we see that 
we
can write $P$ as a finite sum 
\[
P = \sum_{u \in \ZZ^{\mathscr{F}}} q_u(s_1,\dots,s_r) x^u
\bigg[ \prod_{u_{\sigma}<0} \prod_{j=0}^{-u_{\sigma}-1}\big(
(\theta_x)_{\sigma} - j \big) \bigg] p_u(\theta_x), 
\]
where the $p_u$ are
polynomials in $|\mathscr{F}|$ indeterminates with coefficients in
$\kk$. 

Note that, by construction, the element
$\prod_{i\in \nsupp(c)} \binom{s_i}{-c_i} \cdot \prod_{i=1}^r
f_i^{s_i+c_i}$ is $F(\NN A)$-graded. Since $P \big[ \prod_{i\in
  \nsupp(c)} \binom{s_i}{-c_i} \cdot \prod_{i=1}^r 
f_i^{s_i+c_i} \big]$ is a multiple of $\prod_{i=1}^r f_i^{s_i}$, we
may assume that the
operator $P$ is $F(\NN A)$-graded as well. In other words, 
we may assume that $u \in F(\ZZ^d)$ if $q_u p_u \neq 0$.

Thus we rewrite
\[
P = \sum_{u=F(v) \in F(\ZZ^d)} q_u(s_1,\dots,s_r) x^u
\bigg[ \prod_{u_{\sigma}<0} \prod_{j=0}^{-u_{\sigma}-1}\big(
(\theta_x)_{\sigma} - j \big) \bigg] p_u(\theta_x), 
\]
and if we denote
\[
\hat{P} = \sum_{v \in \ZZ^d} q_{F(v)}(s_1,\dots,s_r) y^v 
\bigg[ \prod_{F_{\sigma}(v)<0} \prod_{j=0}^{-F_{\sigma}(v)-1}\big(
F_\sigma(\theta_y) - j \big) \bigg]
p_{F(v)}((F_{\sigma}(\theta_y))_{\sigma\in \mathscr{F}}), 
\]
then $\hat{P}$ is an element of $D_A[s_1,\dots,s_r]$, and
$\hat{P}$ applied to $\prod_{i\in \nsupp(c)} \binom{s_i}{-c_i} \cdot \prod_{i=1}^r
g_i^{s_i+c_i}$ is a polynomial of $s_1,\dots,s_r$ times
$\prod_{i=1}^r g_i^{s_i}$, and this is the same polynomial that we
obtain when we apply $P$ to 
$\prod_{i\in \nsupp(c)} \binom{s_i}{-c_i} \cdot \prod_{i=1}^r
f_i^{s_i+c_i}$ (and divide by $\prod f_i^{s_i}$).

Conversely, if we apply an element of $D_A[s_1,\dots,s_r]$ to
$\prod_{i\in \nsupp(c)} \binom{s_i}{-c_i} \cdot \prod_{i=1}^r g_i^{s_i+c_i}$ 
and obtain a polynomial in $s_1,\dots,s_r$ times $\prod_{i=1}^r
g_i^{s_i}$, then we can obtain an element of $D[s_1,\dots,s_r]$ which
applied to $\prod_{i\in \nsupp(c)} \binom{s_i}{-c_i} \cdot
\prod_{i=1}^r f_i^{s_i+c_i}$ gives exactly the same polynomial in
$s_1,\dots,s_r$ times $\prod_{i=1}^r f_i^{s_i}$. To do this, we write
the element of $D_A[s_1,\dots,s_r]$ as a sum of terms of the form
\[
q_v(s_1,\dots,s_r)y^v \bigg[\prod_{F_\sigma(v)<0}
\prod_{j=0}^{-F_\sigma(v)-1}(F_\sigma(\theta_y)-j)\bigg]
p_v(\theta_y),
\]
then express (non uniquely) the polynomial $p_v$ as sums of powers of
the linear forms $F_\sigma(\theta_y)$, and then apply $F$ in the
obvious way. Nonuniqueness comes because we need to choose $d$
  linearly independent forms $F_\sigma$ in order to get $\kk$-algebra
  generators of $\kk[\theta_y]$; however, since the image of $F$ as a
  linear map is $d$-dimensional, the choice does not affect the image.
\end{proof}

\section{Multiplier Ideals on Normal Toric Varieties}
\label{s:MultiplierIdealNormalToricVariety}
We recall basic definitions of multiplier ideals that can be found in
Lazarsfeld's text, Positivity in Algebraic Geometry II~\cite{Laz04b}.

Let $X$ be a smooth variety over an algebraically closed field $\kk$
of characteristic $0$. Let $\cI \subseteq \cO_X$ be an ideal sheaf,
and $\alpha>0$ a rational number. Fix a log resolution $\mu: X'
\rightarrow X$ of $\cI$ with $\cI\cdot \cO_{X'} = \cO_{X'}(-E)$. The
multiplier ideal of the pair $(X, \alpha \cI)$ is defined as  
\[ 
\cJ(X,\alpha \cI) = \mu_* \cO_{X'}(K_{X'/X}- \lfloor \alpha \cdot E
\rfloor),
\]
where $K_{X'/X}=K_{X'}- \mu^*K_X$ is the relative canonical divisor of
$\mu$, which is an effective divisor supported on the exceptional
locus of $\mu$ whose local equation is given by the determinant of the
derivative $d \mu$. 
The definition does not depend on the choice of log resolution. One of
the important features of multiplier ideals is that $\cJ(X,\alpha
\cI)$ measures the singularity of the pair $(X,\alpha \cI)$: a smaller
multiplier ideal corresponds to a worse singularity. 

Notice that $\cJ(X,\alpha \cI)$ becomes smaller as $\alpha$ increases.
The jumping coefficients of $(X,\alpha \cI)$ are the positive real
numbers $0<\alpha_1<\alpha_2<\dots$ such that $\cJ(X,\alpha_j
\cI)=\cJ(X,\alpha \cI)\ne 
\cJ(X,\alpha_{j+1} \cI)$ for $\alpha_j\le \alpha <\alpha_{j+1} (j\ge 0)$ where $\alpha_0=0$. 
When $X$ is affine and $I$ is the ideal in the coordinate ring
$\kk[X]$ corresponding the the sheaf $\cI$, Budur, Musta\c{t}\u{a} and
Saito proved that jumping coefficients of $(X,\alpha I)$ in
$[\alpha_f,\alpha_f+1)$ are roots of $b_f(-s)$, where
$f=(f_1,\dots,f_r)$ is a set of generators for $I$, $b_f(s)$ is the
Bernstein--Sato polynomial of $I$, and $\alpha_f$ is the smallest root
of $b_f(-s)$. One of our goals is to generalize this correspondence
between $b$-function roots and jumping coefficients
to monomial ideals on affine normal toric varieties.

In the special case that $I$ is a monomial ideal in the polynomial
ring $\kk[x_1,\dots,x_m]$, Howald gave the following combinatorial
formula for multiplier ideal of the pair $(\AA^m, \alpha I)$, 
\[
\cJ(\AA^m, \alpha I) = \< x^v \mid v+e \in \relint \left(\alpha
  P_I\right) \>,
\]
where $P_I$ is the Newton polyhedron of $I$ and $e$ is the vector
$(1,1,\dots,1)$ in $\NN^m$. In particular, a rational number
$\alpha>0$ is a jumping coefficient of $(\AA^m, \alpha I)$ if and only
if the boundary of $-e + \relint \left( \alpha P_I\right)$ contains a
lattice point in $\NN^m$.

The notion of multiplier ideal can be generalized to the case that
$\cI$ is an ideal sheaf on a normal variety $X$ over $\kk$ as
follows. 
Let $\Delta$ be an effective $\QQ$-divisor such that $K_X+\Delta$ is
$\QQ$-Cartier. Such $\Delta$ is called a boundary divisor. Let $\mu :
X' \rightarrow X$ be a log resolution of the triple $(X,\Delta, \cI)$
and $\alpha >0$ be a rational number. Suppose that $\cI\cdot \cO_{X'}=
\cO_{X'}(-E)$. 
Then one can define the multiplier ideal $\cJ(X,\Delta, \alpha \cI)$
associated to the triple $(X,\Delta, \alpha \cI)$ as 
\[
\cJ(X,\Delta, \alpha \cI) = \mu_* \cO_{X'} \left( K_{X'} - \lfloor
  \mu^* (K_X +\Delta) +\alpha E \rfloor \right).
\]
Again, this definition is not dependent on the choice of $\mu$.
De Fernex and Hacon~\cite{dFH09} have given a definition of
$\cJ(X,\alpha Z)$ for non-$\QQ$-Gorenstein $X$ without using the
boundary divisor $\Delta$. They showed 
that there exists a boundary divisor $\Delta$ such that the multiplier
ideal $\cJ(X,\Delta, \alpha Z)$ coincides with their multiplier ideal
$\cJ(X,\alpha Z)$ and that $\cJ(X,\alpha Z)$ is the unique maximal
element of the set  
\[
\{\cJ(X,\Delta, \alpha Z) \mid \Delta \text{ is a boundary divisor}\}.
\]

\subsection{A New expression for multiplier ideals on toric varieties}
\label{ssec:NewExpressionofToricMultiplierIdeal}
Let us explain how one can use the map $F$ to understand the
multiplier ideals of De Fernex and Hacon in the case of monomial
ideals on normal toric varieties. 

Let $\kk[\NN A] \subset \kk[y_1^\pm, \dots,y_d^\pm]$ be a normal
semigroup ring in our setting and let $X= \Spec(\kk[\NN A])$. Each
facet $\sigma \in \sF $ corresponds to a torus invariant prime Weil
divisor $D_\sigma$ on $X$.   
The canonical class of $X$ is represented by the torus invariant
canonical divisor $K_X= -\sum_{\sigma \in \sF} D_\sigma$. A boundary
divisor $\Delta$ is an effective $\QQ$-divisor such that $K_X+\Delta$
is $\QQ$-Cartier, which means there exist $l \in \ZZ$ and $u\in \ZZ^d$
such that $l(K_X +\Delta)=\div (y^u)$. 
Denote $w_\Delta = {u \over l}$. Notice that 
$$
\Delta = \sum_{\sigma \in \sF} (1+F_\sigma(w_\Delta))D_\sigma,
$$
 so the effectivity of $\Delta$ is equivalent to the condition that
 $F_\sigma(w_\Delta) \ge -1$ for all $\sigma \in \sF$. 
 Conversely, each $w \in \QQ^d$ gives rise to a boundary divisor
 $\Delta_w$ on $X$ by the same formula 
\[
\Delta_w = \sum_{\sigma \in \sF} (1+F_\sigma(w))D_\sigma.
\]
 
Let $I$ be a monomial ideal in $\kk[\NN A]$ and let $\alpha >0$ be a rational number.
Blickle~\cite{Bli04} showed that the multiplier ideal 
$\cJ(X,\Delta, \alpha I) = \< y^v \in \kk[\NN A]  \mid v-w_\Delta \in
\relint(\alpha P_I) \>,$ generalizing Howald's description of multiplier ideals~\cite{How01}
in the case of monomial ideals in polynomial rings. Be cautioned about
the mistake of the sign of $w_\Delta$ in Blickle's original
description. 
For $w \in\QQ^d$, denote 
\begin{equation}
\label{eqn:MultiplierIdealExponents}
 \Omega_{w, \alpha I} = \left[w +\relint (\alpha P_I) \right] \text{ and }
\Omega_{\alpha I}= \bigcup_{w \in \QQ^d: F_\sigma(w) \ge -1 \,  \forall \sigma \in \sF} \Omega_{w, \alpha I}.
\end{equation}
Then the multiplier ideal of De Fernex and Hacon is
\begin{equation}
\label{eqn:DF-HMultiplierIdeal}
\cJ(X,\alpha I) = \< y^v \in \kk[\NN A] \mid v \in \Omega_{\alpha I}\>.
\end{equation}

We claim that $\cJ(X, \alpha I)$ can be computed using an analog of
Howald's formula. Recall that the linear map $F:\RR^d \rightarrow
\RR^\sF$ induces ring homomorphisms 
\[ 
\kk[\NN A] \xrightarrow{\sim} \kk[F(\NN A)] \to \kk[\NN^\sF].
\]
By abusing notation, denote 
\begin{equation}
\label{eqn:F(I)}
F(I) = \kk[F(\NN A)]\cdot \< x^{F(v)} \mid y^v \in I \>
\end{equation}
the ideal in $\kk[F(\NN A)]$ obtained from the semigroup isomorphism
$F: \NN A \xrightarrow{\sim} F(\NN A)$.  
The monomial ideal in $\kk[\NN^\sF]$ generated by $F(I)$ is
denoted by 
\begin{equation}
\label{eqn:<F(I)>}
J=\kk[\NN^\sF]\cdot \< F(I)\>.
\end{equation}

Let $e$ be the element in $\RR^\sF$ such that $e_\sigma=1$ for all
$\sigma  \in \sF$. Notice that $e$ may not be in $F(\NN A)$ and that
$e \in F(\NN A)$ if and only if $X$ is Gorenstein. Even if one extends
to rational coefficients, the element $e$ may not be in $F(\QQ \otimes
\NN A) =F(\QQ^d)$. The condition $e \in F(\QQ^d)$ holds exactly when
$X$ is $\QQ$-Gorenstein, namely  $K_X$ is a $\QQ$-Cartier
divisor. This is the case when $X$ is $\QQ$-factorial (or,
equivalently, the semigroup $\NN A$ is simplicial).  

In the case where $e \in F(\QQ^d)$, it is clear that $\Omega_{\alpha
  I}=\Omega_{F^{-1}(-e),\alpha I},$ so $w=F^{-1}(-e)$ corresponds to
the $\Delta$ of De Fernex and Hacon. In fact, the divisor
$\Delta_{F^{-1}(-e)} = 0$ coincides with the canonical choice of
boundary divisor for the pair $(X,\alpha I)$ in the $\QQ$-Gorenstein
case. Moreover, we have the following analog of Howald's formula for
$\cJ(X,\alpha I)$. 
\begin{proposition}
\label{prop:ToricHowaldFormulaQGor} 
If $X$ is $\QQ$-Gorenstein, then
$$\cJ(X,\alpha I)  =\< y^v\in \kk[\NN A]  \mid F(v)+e \in \relint (\alpha P_{F(I)}) \>.$$
\end{proposition}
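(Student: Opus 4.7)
My plan is to combine the De Fernex--Hacon description~\eqref{eqn:DF-HMultiplierIdeal} with a purely convex-geometric translation under the linear map $F$. The first step is to collapse the union in~\eqref{eqn:MultiplierIdealExponents} to a single translate. The $\QQ$-Gorenstein hypothesis gives $e \in F(\QQ^d)$, so I can pick $w_0 \in \QQ^d$ with $F(w_0)=-e$. For any $w \in \QQ^d$ with $F_\sigma(w) \geq -1$ for all $\sigma \in \sF$, the vector $w-w_0$ satisfies $F_\sigma(w-w_0) \geq 0$ for every facet, so lies in $C$, which is the recession cone of $\alpha P_I$; since $\relint(\alpha P_I)+C \subseteq \relint(\alpha P_I)$ this forces $\Omega_{w,\alpha I} \subseteq \Omega_{w_0,\alpha I}$. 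Since $F_\sigma(w_0)=-1$, also $w_0$ contributes to the union, so $\Omega_{\alpha I} = w_0 + \relint(\alpha P_I)$. Combined with~\eqref{eqn:DF-HMultiplierIdeal} this yields
\begin{equation*}
\cJ(X,\alpha I) = \langle y^v \in \kk[\NN A] : v - w_0 \in \relint(\alpha P_I)\rangle.
\end{equation*}

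Next, I would establish the identity $F(P_I) = P_J \cap F(\RR^d)$ between the two Newton polyhedra. The inclusion $\subseteq$ follows from $F(C) \subseteq \RR_{\geq 0}^\sF$. Conversely, any $p \in P_J \cap F(\RR^d)$ may be written $p = \sum_i \lambda_i F(\beta_i) + r$ with $\lambda_i \geq 0$, $\sum_i \lambda_i = 1$, $r \in \RR_{\geq 0}^\sF$; then $r = p - F(\sum_i \lambda_i \beta_i) \in F(\RR^d) \cap \RR_{\geq 0}^\sF = F(C)$, so $p \in F(\cvx(\beta_i) + C) = F(P_I)$.

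The main step is then to deduce $F(\relint(\alpha P_I)) = \relint(\alpha P_J) \cap F(\RR^d)$. Strong convexity of $C$ makes the $F_\sigma$'s jointly separate points, so $F:\RR^d \to F(\RR^d)$ is a linear homeomorphism. I would then invoke the standard convex-geometry lemma: if $V$ is an affine subspace of $\RR^n$ that meets the topological interior of a full-dimensional convex set $K \subseteq \RR^n$, then $V \cap \relint(K)$ equals the relative interior of $V \cap K$ inside $V$. To verify the hypothesis $F(\RR^d) \cap \relint(\alpha P_J) \neq \nothing$, I pick $y$ in the topological interior of $C$, so $F_\sigma(y)>0$ for every $\sigma$, i.e.\ $F(y) \in \RR_{>0}^\sF$; a sufficiently large positive multiple $F(\lambda y)$ then dominates every $F(\beta_i)$ coordinate-wise and so lies in $\relint(\alpha P_J) \cap F(\RR^d)$. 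Transporting the resulting identity through the homeomorphism $F$ gives, for any $q \in \RR^d$, the equivalence $q \in \relint(\alpha P_I) \iff F(q) \in \relint(\alpha P_J)$.

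Setting $q = v - w_0$ gives $F(q) = F(v) - F(w_0) = F(v) + e$, and combined with the first step this yields the proposition. The hard part is the convex-geometric step: $P_J \subset \RR^\sF$ is full-dimensional while $F(\RR^d)$ is a proper subspace when $X$ is not simplicial, so the switch between the topological interior in $\RR^\sF$ and the relative interior in $F(\RR^d)$ must be mediated through the standard lemma above, and the non-emptiness hypothesis of that lemma has to be checked using strong convexity of $C$.
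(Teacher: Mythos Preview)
Your first step—collapsing $\Omega_{\alpha I}$ to the single translate $w_0+\relint(\alpha P_I)$ with $F(w_0)=-e$—is exactly what the paper does (this reduction is stated in the paragraph immediately preceding the proposition). After that, however, you take an unnecessary detour. The proposition is stated for $P_{F(I)}$, the Newton polyhedron of $F(I)$ \emph{inside the subspace} $F(\RR^d)$, not for $P_J\subseteq\RR^\sF$. Since $F:\RR^d\to F(\RR^d)$ is a linear isomorphism (strong convexity of $C$) and $P_{F(I)}=F(P_I)$ by definition, one has $F(\relint(\alpha P_I))=\relint(\alpha P_{F(I)})$ immediately; no ambient polyhedron $P_J$ and no affine-slice lemma are needed. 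The paper's proof is literally the four-line chain
\[
v\in\Omega_{\alpha I}\;\Longleftrightarrow\; v\in\Omega_{F^{-1}(-e),\alpha I}\;\Longleftrightarrow\; F(v)+e\in\relint(\alpha F(P_I))=\relint(\alpha P_{F(I)}).
\]

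Your route through $P_J$ and the convex-geometry slicing lemma is correct, but it reproduces the content of Lemma~\ref{lem:P_F(I)&P_J}, which the paper invokes only later to pass from $P_{F(I)}$ to $P_J$ in the \emph{general} Proposition~\ref{prop:ToricHowaldFormulaGeneral}. In effect you have proved the $\QQ$-Gorenstein case of Proposition~\ref{prop:ToricHowaldFormulaGeneral} rather than Proposition~\ref{prop:ToricHowaldFormulaQGor} as stated: your final line gives $F(v)+e\in\relint(\alpha P_J)$, not $\relint(\alpha P_{F(I)})$. One more sentence closes the gap—since $e\in F(\QQ^d)$ here, $F(v)+e\in F(\RR^d)$, and your own identity $\relint(\alpha P_J)\cap F(\RR^d)=F(\relint(\alpha P_I))=\relint(\alpha P_{F(I)})$ finishes it—but this should be made explicit.
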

\begin{proof}
\begin{equation}
\label{eqn:ToricHowaldFormula}
\begin{aligned}
\cJ(X,\alpha I) &= \<y^v\in \kk[\NN A]  \mid v \in \Omega_{\alpha I} \>\\
&=\< y^v \in \kk[\NN A] \mid v \in \Omega_{F^{-1}(-e), \alpha I} \>\\
&=
\< y^v\in \kk[\NN A]  \mid F(v)+e \in \relint (\alpha F(P_I)) \>\\
&=\< y^v\in \kk[\NN A]  \mid F(v)+e \in \relint (\alpha P_{F(I)}) \>.
\end{aligned}
\end{equation}
Since $P_I =\cvx \left[\bigcup_{v: y^v \in I} \left(v+C\right)\right]$
is the convex hull in $\RR^d$ of the set $\bigcup_{y^v \in I}
\left(v+C\right)$, we see that $F(P_I) =\cvx \left[\bigcup_{v: y^v \in
    I} \left(F(v)+F(C)\right)\right] $ is exactly the Newton
polyhedron $P_{F(I)}$ in $F(\RR^d)$ of the ideal $F(I)$ in the
semigroup ring $\kk[ F(\NN A)]$. The last expression
in~\eqref{eqn:ToricHowaldFormula} is analogous to Howald's formula.   
\end{proof}

{\bf Notations for Newton Polytopes.} For a monomial ideal $I$ in
$\kk[\NN A] \subset \kk[y_1^{\pm 1},\dots,y_d^{\pm 1}]$, denote $P_I$
the Newton polyhedron of  $I$, which by definition is the convex hull
of  
$ \bigcup_{v:y^v \in I} \left( v+\NN A\right)$ in $\RR^d$. The
relative interior of $P_I$ is denoted by $\relint P_I$.

Recall from~\eqref{eqn:F(I)} that $F(I)$ denotes a monomial ideal in
$\kk[\NN A]$. 
The Newton polyhedron of $F(I)$ is denoted by $P_{F(I)}$; this is the
convex hull of $\{F(v) \mid y^v \in I\}$ in $F(\RR^d)$. 
We have 
\[
P_{F(I)} =\cvx \left[ \bigcup_{v: y^v \in I} (F(v)+F(\NN A))\right].
\]
Recall also from~\eqref{eqn:<F(I)>} that $J=\kk[\NN^\sF]\cdot\<F(I)\>$ denotes the
monomial ideal in $\kk[\NN^{\sF}]$ generated by $F(I)$.
The Newton polyhedron of $J$ in $\RR^\sF$ is 
\[
P_J = \cvx \left[ \bigcup_{v: y^v \in I}
  \left(F(v)+\NN^\sF\right)\right].
\]

Note that if $y^{\beta_1}, \dots, y^{\beta_r}$ is the set of minimal
monomial generators of $I$, then $\{\beta_1, \dots, \beta_r\}$
(respectively, $\{F(\beta_1), \dots, F(\beta_r)\}$) is exactly the set
of vertices of the Newton polyhedron $P_I$ (respectively, $P_{F(I)}$
or $P_J$).

For general $X$, we give a description of $\cJ(X,\alpha I)$ using the
Newton polyhedron of the ideal $J=\kk[\NN^\sF]\cdot\<F(I)\>$. We first
need to compare the relative interiors of $\alpha P_{F(I)}$ and
$\alpha P_J$. 
\begin{lemma}
\label{lem:P_F(I)&P_J} For general $\NN A$, the relative interiors of
$P_{F(I)}$ and $P_J$ satisfy 
$$\relint \left( \alpha P_{F(I)} \right) = F(C) \cap \relint \left( \alpha P_{J} \right).$$
\end{lemma}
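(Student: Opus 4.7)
The plan is to establish two underlying set-theoretic identities first, and then deduce the relative-interior statement using standard facts about Minkowski sums and relative interiors. The key identities I would aim for are
\[
P_J \;=\; P_{F(I)} + \RR_{\geq 0}^\sF \qquad \text{and} \qquad F(C) \cap P_J \;=\; P_{F(I)}.
\]
Writing $y^{\beta_1},\dots,y^{\beta_r}$ for the minimal monomial generators of $I$, the Newton-polyhedron description of the notation above gives $P_{F(I)} = \cvx\{F(\beta_i)\} + F(C)$ and $P_J = \cvx\{F(\beta_i)\} + \RR_{\geq 0}^\sF$. The first identity then follows immediately from $F(C) \subseteq \RR_{\geq 0}^\sF$. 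For the second, given $q \in F(C) \cap P_J$, write $q = q_0 + r$ with $q_0 \in \cvx\{F(\beta_i)\} \subseteq F(\RR^d)$ and $r \in \RR_{\geq 0}^\sF$; then $r = q - q_0 \in F(\RR^d) \cap \RR_{\geq 0}^\sF = F(C)$ (using that $F$ is injective and $C = \{w : F_\sigma(w) \geq 0 \ \forall \sigma\}$), so $q \in P_{F(I)}$.

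For the containment $F(C) \cap \relint(\alpha P_J) \subseteq \relint(\alpha P_{F(I)})$, I would use the simple observation that $\relint(\alpha P_J) \subseteq \RR_{> 0}^\sF$: since $\alpha P_J \subseteq \RR_{\geq 0}^\sF$, any interior point must have strictly positive coordinates, else a small perturbation in a negative coordinate direction would leave $\alpha P_J$. So if $p \in F(C) \cap \relint(\alpha P_J)$, then $p \in F(\RR^d) \cap \RR_{>0}^\sF$, which equals the topological interior of $F(C)$ inside $F(\RR^d)$. A small ball $B$ in $F(\RR^d)$ centered at $p$ therefore satisfies both $B \subseteq F(C)$ and $B \subseteq \alpha P_J$ (the latter because $p$ is interior to $\alpha P_J$ in $\RR^\sF$), so $B \subseteq F(C) \cap \alpha P_J = \alpha P_{F(I)}$, giving $p \in \relint(\alpha P_{F(I)})$.

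For the reverse containment, take $p \in \relint(\alpha P_{F(I)})$; then $p \in F(C)$ is automatic from $P_{F(I)} \subseteq F(C)$. To check $p \in \relint(\alpha P_J)$, I would invoke the standard formula $\relint(A+B) = \relint(A) + \relint(B)$ applied to $A = \alpha P_{F(I)}$ and $B = \RR_{\geq 0}^\sF$, which combined with $P_J = P_{F(I)} + \RR_{\geq 0}^\sF$ yields
\[
\relint(\alpha P_J) \;=\; \relint(\alpha P_{F(I)}) + \RR_{> 0}^\sF .
\]
Fix any $r_0 \in F(\RR^d) \cap \RR_{> 0}^\sF$ (nonempty because $F(C)$ is full-dimensional in $F(\RR^d)$). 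For sufficiently small $t > 0$ the point $p - t r_0$ still lies in $\relint(\alpha P_{F(I)})$, and $t r_0 \in \RR_{> 0}^\sF$, so the decomposition $p = (p - t r_0) + t r_0$ exhibits $p$ as an element of the right-hand side.

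The main technical point is bookkeeping two different relative-interior notions — one in the $d$-dimensional affine hull $F(\RR^d)$ and one in $\RR^\sF$. The whole argument hinges on the Minkowski-sum identity $P_J = P_{F(I)} + \RR_{\geq 0}^\sF$ together with the existence of the ``diagonal'' vector $r_0 \in F(\RR^d) \cap \RR_{> 0}^\sF$, which is what lets one move orthogonally to $F(\RR^d)$ while staying inside $\alpha P_J$.
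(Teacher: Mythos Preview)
Your argument is correct. Both containments go through as written; the only slip is the closing summary sentence, where you say $r_0$ lets you ``move orthogonally to $F(\RR^d)$''. In fact $r_0 \in F(\RR^d)$, and its role is precisely the opposite: it lets you perturb \emph{within} $F(\RR^d)$ (so that $p - tr_0$ stays in $\relint(\alpha P_{F(I)})$) while simultaneously contributing a strictly positive vector in $\RR_{>0}^\sF$. This is a cosmetic slip in the description, not in the mathematics.

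Your route differs in organization from the paper's. The paper observes directly that a point lies in $\relint P_{F(I)}$ (respectively $\relint P_J$) iff it can be written as $v + w$ with $v$ on a bounded face and $w \in \relint F(C)$ (respectively $w \in \relint \RR^\sF_{\geq 0}$); since $F(I)$ and $J$ share the same minimal generators, the bounded faces coincide, and the whole lemma reduces in one step to the cone identity $\relint F(C) = F(C) \cap \relint \RR^\sF_{\geq 0}$. You instead build up more scaffolding: the two set identities $P_J = P_{F(I)} + \RR^\sF_{\geq 0}$ and $F(C) \cap P_J = P_{F(I)}$, then the Minkowski-sum formula $\relint(A+B) = \relint A + \relint B$, and a ball argument. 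Both arguments hinge on the same underlying fact---that $F(\RR^d) \cap \RR^\sF_{>0}$ is exactly $\relint F(C)$---so they are close cousins. The paper's version is shorter because the bounded-face characterization does the bookkeeping of the two ambient dimensions implicitly; your version is more self-contained because it avoids invoking that characterization and works with standard convexity tools instead.
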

\begin{proof}
A point $u$ lies in $\relint( P_{F(I)})$ if and only if $u-v \in
\relint (F(C))$ for some $v$ on a bounded face of
$P_{F(I)}$. Similarly, a point $u$ lies in $\relint( P_J)$ if and only
if $u-v \in \relint \RR^\sF_{\ge 0}$ for some $v$ on a bounded face of
$P_J$.
Since $F(I)$ and $J$ have the same minimal monomial generators, the
bounded faces of $P_{F(I)}$ and $P_J$ coincide. Therefore, it suffices
to show that 
\[
\relint( F(C)) = F(C) \cap \relint (\RR_{\ge 0 }^\sF).
\]
But for $p\in C$, $F(p) \in \relint F(C)$ if and only if $p$ is not
contained in any facet $\sigma \in \sF$, which means exactly
$F_\sigma(p) >0$. 
\end{proof}

\begin{proposition}
\label{prop:ToricHowaldFormulaGeneral}
The multiplier ideal
$$\cJ(X,\alpha I) = \< y^v \in \kk[\NN A] \mid F(v)+e  \in \relint(\alpha P_{J})\>.$$
\end{proposition}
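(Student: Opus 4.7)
It suffices to show that for each $v \in \NN A$, the conditions $v \in \Omega_{\alpha I}$ and $F(v)+e \in \relint(\alpha P_J)$ are equivalent; both sets of exponents are stable under addition of elements of $\NN A$ (because $C$ is the recession cone of $\alpha P_I$ and $\RR_{\ge 0}^{\sF}$ is the recession cone of $\alpha P_J$, and $F(\NN A) \subseteq \RR_{\ge 0}^{\sF}$), so equality of the two monomial ideals follows from this pointwise equivalence together with the description~\eqref{eqn:DF-HMultiplierIdeal}.

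For the direction $v \in \Omega_{\alpha I} \implies F(v)+e \in \relint(\alpha P_J)$, I pick $w \in \QQ^d$ with $F_\sigma(w) \ge -1$ for all $\sigma$ and $v-w \in \relint(\alpha P_I)$. Since the primitive support functions $F_\sigma$ span $(\RR^d)^\ast$, the linear map $F:\RR^d \to F(\RR^d)$ is a bijection, so $F(\relint(\alpha P_I)) = \relint(\alpha P_{F(I)})$; by Lemma~\ref{lem:P_F(I)&P_J} this set is contained in $\relint(\alpha P_J)$. Writing $F(v)+e = F(v-w) + (F(w)+e)$ with $F(w)+e \in \RR_{\ge 0}^{\sF}$, the conclusion follows because $\relint(\alpha P_J)$ is invariant under adding elements of its recession cone $\RR_{\ge 0}^{\sF}$.

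For the reverse direction, the key observation is the Minkowski sum identity $\alpha P_J = \alpha P_{F(I)} + \RR_{\ge 0}^{\sF}$, which holds because $\alpha P_{F(I)} = \cvx\{\alpha F(\beta_i)\} + F(C)$, $\alpha P_J = \cvx\{\alpha F(\beta_i)\} + \RR_{\ge 0}^{\sF}$, and $F(C) + \RR_{\ge 0}^{\sF} = \RR_{\ge 0}^{\sF}$. The standard convex-analysis identity $\relint(A+B) = \relint A + \relint B$ then yields
\[
\relint(\alpha P_J) = \relint(\alpha P_{F(I)}) + \RR_{>0}^{\sF}.
\]
Assuming $F(v)+e \in \relint(\alpha P_J)$, I decompose $F(v)+e = q_0 + r_0$ with $q_0 \in \relint(\alpha P_{F(I)})$ and $r_0 \in \RR_{>0}^{\sF}$, yielding a strict componentwise inequality $q_0 < F(v)+e$. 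Using density of $F(\QQ^d)$ in $F(\RR^d)$ together with openness of $\relint(\alpha P_{F(I)})$ in $F(\RR^d)$, I perturb $q_0$ to some $q \in F(\QQ^d) \cap \relint(\alpha P_{F(I)})$ while maintaining $q \le F(v)+e$ componentwise. Setting $w = F^{-1}(F(v)-q) \in \QQ^d$, one gets $F_\sigma(w) = F_\sigma(v) - q_\sigma \ge -1$ and $F(v-w) = q \in \relint(\alpha P_{F(I)}) = F(\relint(\alpha P_I))$, so $v-w \in \relint(\alpha P_I)$ by injectivity of $F$; hence $v \in \Omega_{w,\alpha I} \subseteq \Omega_{\alpha I}$.

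The main obstacle is the non-$\QQ$-Gorenstein case: because $e \notin F(\QQ^d)$ in general, one cannot simply take $w = -F^{-1}(e)$ as in the proof of Proposition~\ref{prop:ToricHowaldFormulaQGor}. The Minkowski decomposition of $\relint(\alpha P_J)$ together with rational approximation provides the necessary substitute, producing a rational $w$ whose image $F(w)$ approximates $-e$ from below closely enough that all the required inequalities remain satisfied.
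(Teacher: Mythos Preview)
Your proof is correct and follows essentially the same strategy as the paper. The forward direction is identical: both you and the paper write $F(v)+e = F(v-w) + (F(w)+e)$, invoke Lemma~\ref{lem:P_F(I)&P_J} to place $F(v-w)$ in $\relint(\alpha P_J)$, and then use that $\relint(\alpha P_J)$ is stable under adding elements of $\RR_{\ge 0}^{\sF}$.

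For the reverse direction there is a mild difference in packaging. The paper uses the bounded-face characterization of $\relint(\alpha P_J)$ (stated in the proof of Lemma~\ref{lem:P_F(I)&P_J}) to find a point $u$ on a bounded face of $\alpha P_I$ with $F_\sigma(v-u)>-1$, and then perturbs by a small $\epsilon p$ with $p\in\relint C$ to produce a rational $w=v-u-\epsilon p$. You instead appeal to the Minkowski identity $\relint(A+B)=\relint A+\relint B$ applied to $\alpha P_J=\alpha P_{F(I)}+\RR_{\ge 0}^{\sF}$ to split $F(v)+e$ directly as an interior point of $\alpha P_{F(I)}$ plus a strictly positive vector, then perturb to a rational $q\in F(\QQ^d)$ and set $w=F^{-1}(F(v)-q)$. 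Both arguments arrive at the same endpoint---a rational $w$ with $F_\sigma(w)\ge -1$ and $v-w\in\relint(\alpha P_I)$---and the underlying geometry is the same; your route replaces the explicit bounded-face description by a cleaner off-the-shelf convex-analysis fact, while the paper's version keeps everything concrete and avoids citing the Minkowski-sum interior formula.
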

\begin{proof}
If $X$ is $\QQ$-Gorenstein, the statement follows from
Proposition~\ref{prop:ToricHowaldFormulaQGor} and
Lemma~\ref{lem:P_F(I)&P_J}.  

In general, by \eqref{eqn:MultiplierIdealExponents} and
\eqref{eqn:DF-HMultiplierIdeal} it suffices to show that  
\[
F(\Omega_{\alpha I} \cap \NN A)  = \left( -e + \relint \left( \alpha
    P_J\right)\right) \cap F(\NN A).
\]

For the containment $F(\Omega_{\alpha I} \cap \NN A)  \subseteq \left( -e + \relint \left( \alpha P_J\right)\right) \cap F(\NN A)$,
it is enough to verify that 
$$
F\left(w+\relint (\alpha P_I)\right)
\subseteq \left[ -e +\relint \left( \alpha P_{J}\right)\right]
$$ 
for any $w\in \QQ^d$ satisfying $F_\sigma( w) \ge -1$ for all $\sigma \in
\sF$. Any such $w$ satisfies $F(w)+e \in \RR^\sF_{\ge 0}$, so by
Lemma~\ref{lem:P_F(I)&P_J} 
\[
\begin{aligned}
F(w+\relint(\alpha P_I))=&\left[F(w)+\relint \left(\alpha F(P_I)\right)\right] \\
=&\left[F(w)+\relint \left(\alpha P_{F(I)}\right)\right] \\
\subseteq &\left[F(w)+\relint \left(\alpha P_{J}\right)\right] \\
 =  &\left[-e+(F(w)+e)+\relint \left(\alpha P_{J}\right)\right]\\
 \subseteq  &\left[-e+\RR^\sF_{\ge 0}+\relint \left(\alpha P_{J}\right)\right]\\
  \subseteq  &\left[-e+\relint \left(\alpha P_{J}\right)\right].
 \end{aligned}
\]

For the other containment $F(\Omega_{\alpha I} \cap \NN A)  \supseteq
\left( -e + \relint \left( \alpha P_J\right)\right) \cap F(\NN A)$,
let $v \in \NN A$ be such that $F(v) + e \in \relint (\alpha P_J)$.  
Then there exists $u$ lying on a bounded face of $\alpha P_I$ such that 
\[
F(v)+e -F(u) \in \relint(\RR_{\ge 0}^\sF)
\]
In particular, $F_\sigma(v-u) >-1$ for all $\sigma \in \sF$. Take any
$p \in \relint C$ and $\epsilon >0$ small enough so that 
\[
w:= v-u-\epsilon p \in \QQ^d \text{ and } F_\sigma(v-u-\epsilon p)
>-1.
\]
Then $F(v)-F(u)-F(w) = F(\epsilon p) \in \relint C$ where $F(u)$ lies
on a bounded face of $\alpha P_{F(I)}$. 
Therefore, we have $F(v) - F(w) \in \relint \left(\alpha  P_{F(I)}
\right)$, and hence $F(v) \in F(\Omega_{ w, \alpha I} \cap \NN A)
\subseteq F(\Omega_{\alpha I} \cap \NN A)$ as desired.   
\end{proof}

\subsection{Roots--Jumping coefficients correspondence on toric varieties}
\label{ssec:ToricRootsJumpingCoefficientsCorrespondence}
Now, we combine the previous observations to establish the following theorem.
\begin{theorem}
\label{thm:ToricRootsJumpingCoefficientsCorrespondence}
Let $\NN A$ be a normal semigroup, $X = \Spec(\kk[\NN A])$ its
associated affine toric variety and let $I$ be a monomial ideal on
$X$. Suppose $\alpha_I$ is the smallest root of $b_I(-s)$ where
$b_I(s)$ is the Bernstein--Sato polynomial of $I$ in $\kk[\NN
A]$. Then any jumping coefficients of the pair $(X, I)$ in $[\alpha_I,
\alpha_I+1)$ are roots of $b_I(-s)$. Moreover, the number $\alpha_I$
is the smallest jumping coefficient (i.e. the log-canonical threshold)
of $(X,I)$. 
\end{theorem}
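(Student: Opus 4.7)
The plan is to reduce the theorem to the polynomial ring case (Theorem~\ref{thm:BMSmainthm}) applied to the ideal $J = \kk[\NN^\sF]\cdot\<F(I)\>$ in $\kk[\NN^\sF]$, by using the two main results already established: Theorem~\ref{thm:ToricbFunctionGeneral}, which identifies $b_I(s) = b_J(s)$, and Proposition~\ref{prop:ToricHowaldFormulaGeneral}, which identifies the multiplier ideals of $(X,I)$ with the ``$F$--lifted'' part of the multiplier ideals of $(\kk^\sF, J)$.

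First, I will compare the multiplier ideals on the two sides. Howald's original formula applied to the monomial ideal $J \subseteq \kk[\NN^\sF]$ gives
\[
\cJ(\kk^\sF, \alpha J) = \< x^u \in \kk[\NN^\sF] \mid u + e \in \relint(\alpha P_J)\>.
\]
Combining this with Proposition~\ref{prop:ToricHowaldFormulaGeneral}, a monomial $y^v \in \kk[\NN A]$ lies in $\cJ(X, \alpha I)$ if and only if $x^{F(v)}$ lies in $\cJ(\kk^\sF, \alpha J)$. Since both multiplier ideals are monomial, this identifies $\cJ(X,\alpha I)$ with the preimage of $\cJ(\kk^\sF, \alpha J)$ under the inclusion $\kk[\NN A] \hookrightarrow \kk[\NN^\sF]$ induced by $F$.

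Next, I will translate this into a comparison of jumping coefficients. A rational number $\alpha > 0$ is a jumping coefficient of $(X, I)$ precisely when some monomial $y^v$ enters $\cJ(X, \alpha I)$ at $\alpha$, i.e.\ $F(v) + e \in \relint(\alpha P_J)$ but $F(v) + e \notin \relint(\alpha' P_J)$ for every $\alpha' < \alpha$. By the monomial-by-monomial criterion above, any such $\alpha$ is also a jumping coefficient of the pair $(\kk^\sF, J)$, i.e.\ the set of jumping coefficients of $(X,I)$ is contained in the set of jumping coefficients of $(\kk^\sF, J)$. In particular, taking $v = 0$ shows that the condition $\cJ(X, \alpha I) = \kk[\NN A]$ is equivalent to $e \in \relint(\alpha P_J)$, which is equivalent to $\cJ(\kk^\sF, \alpha J) = \kk[\NN^\sF]$; hence the log-canonical thresholds of the two pairs coincide.

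Finally, by Theorem~\ref{thm:ToricbFunctionGeneral}, we have $b_I(s) = b_J(s)$, so $\alpha_I = \alpha_J$ and the two polynomials have identical roots. Applying Theorem~\ref{thm:BMSmainthm} to the ideal $J$ in the smooth ambient space $\kk^\sF$, the log-canonical threshold of $(\kk^\sF, J)$ equals $\alpha_J = \alpha_I$ and every jumping coefficient of $(\kk^\sF, J)$ in $[\alpha_J, \alpha_J + 1)$ is a root of $b_J(-s) = b_I(-s)$. Combining with the inclusion of jumping coefficients and the equality of log-canonical thresholds established in the previous step yields both assertions of the theorem. The main delicate point, and the only place where normality of $\NN A$ is used essentially beyond what is already packaged into Theorem~\ref{thm:ToricbFunctionGeneral} and Proposition~\ref{prop:ToricHowaldFormulaGeneral}, is ensuring that the identification of monomial membership passes cleanly through the non-surjective inclusion $F: \NN A \hookrightarrow \NN^\sF$; this is precisely what Lemma~\ref{lem:P_F(I)&P_J} guarantees, and it is what makes the containment of jumping coefficients (rather than an equality) sufficient for the argument.
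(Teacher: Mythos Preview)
Your proof is correct and follows essentially the same route as the paper's: reduce to $J\subseteq\kk[\NN^\sF]$ via $b_I=b_J$ and the identity $\cJ(X,\alpha I)=\kk[\NN A]\cap\cJ(\kk^\sF,\alpha J)$ coming from Proposition~\ref{prop:ToricHowaldFormulaGeneral} together with Howald's formula, deduce the containment of jumping coefficients, and match the log-canonical thresholds via the monomial $y^0$. Two small remarks: the paper invokes Proposition~\ref{prop:bFunctionRelation} rather than Theorem~\ref{thm:ToricbFunctionGeneral} (either suffices here since $I$ is monomial), and your closing appeal to Lemma~\ref{lem:P_F(I)&P_J} is unnecessary---that lemma is already absorbed into the proof of Proposition~\ref{prop:ToricHowaldFormulaGeneral}, so no further use of it is needed at this stage.
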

\begin{proof}
By Proposition~\ref{prop:bFunctionRelation}, the Berstein--Sato
polynomial $b_I(s)$ of $I$ coincides with the Bernstein--Sato
polynomial of the monomial ideal $J=\kk[\NN^\sF]\cdot\<F(I)\>$. Thus
by~\cite[Theorem~2]{BMS06}, the jumping coefficients of the pair $(
\AA^\sF, J)$ in $[\alpha_I, \alpha_I+1)$ are roots of $b_I(-s)$. By
Howald's formula, a number $\alpha$ is a jumping coefficient of $(
\AA^\sF,  J)$ exactly when the boundary of $\left(-e+\alpha P_{J}
\right)$ contains a lattice point in $\NN^\sF$. Also, according to
Proposition~\ref{prop:ToricHowaldFormulaGeneral} a number $\alpha$ is
a jumping coefficient of $( X, I)$ exactly when the boundary of
$\left(-e+\alpha P_{J} \right)$ contains a lattice point in $F(\NN
A)$. Therefore, the jumping coefficients of $(X, I)$ are jumping
coefficients of $(\AA^\sF, J)$, and the first statement of this theorem
follows.

To prove $\alpha_I$ is the log-canonical threshold of $(X,I)$, it
suffices to show that the boundary of $ (-e + \alpha_I P_J)$
intersects $F(\NN A)$. Since $\alpha_I$ is the log-canonical threshold
of $(\AA^\sF,J)$,  we have 
\[
1 \in \kk[\NN^\sF]=\mathcal J( \AA^\sF, \alpha_I J) \text{, and hence }
 F(0)=0 \in (-e + \alpha_I P_J) \cap F(\NN A).
\]
\end{proof}

\begin{example}
Let $A=\begin{pmatrix} 1&1&1&1\\0&1&2&3\end{pmatrix}$ and $I =
\<y_1y_2,y_1y_2^2 \>$ a monomial ideal in $\kk[\NN A]$. In this case,
the semigroup ring $\kk[\NN A]$ is simplicial and hence
$\QQ$-Gorenstein. The linear mapping $F: \RR^2 \rightarrow \RR^2$ is
represented by the matrix
$\begin{pmatrix}3&-1\\0&\phantom{-}1\end{pmatrix}$. The subsemigroup
$\kk[F(\NN A)]$ of $\kk[x_1,x_2]$ is generated by
$x_1^3,x_1^2x_2,x_1x_2^2,x_2^3$ and the monomial ideal $J=
\kk[x_1,x_2]\<x_1^2x_2,x_1x_2^2\>$. Using the method of Budur,
Musta\c{t}\u{a} and Saito
as discussed in subsection~\ref{ssec:bFunctionDefMonomial}, one can
compute the Bernstein--Sato polynomial
$b_{J}(s)=(s+1)^2(3s+2)(3s+4)$. (We point out that $b$-function algorithms
have been developed and implemented, see~\cite{BL10}.)
Moreover, by Proposition~\ref{prop:bFunctionRelation} we have
$b_I(s)=(s+1)^2(3s+2)(3s+4)$ as well. On the other hand, using
Howald's formula, one finds that ${2\over 3},1,{4\over 3}$ are jumping
coefficients of $(\AA^2, J)$, but only ${2\over 3},1$ are jumping
coefficients of $(X,  I)$ according to
Proposition~\ref{prop:ToricHowaldFormulaGeneral}. 
\end{example}

\section*{Acknowledgements}

The first author thanks Texas A\&M University for the hospitality he
enjoyed during his visit in January 2016, when this work was
initiated.

\bibliographystyle{amsalpha}
\bibliography{bfunction}

\end{document}